\newtheorem{thm}{Theorem}[section]
\newtheorem{cor}[thm]{Corollary}
\newtheorem{prop}[thm]{Proposition}
\newenvironment{customthm}[1]
  {\innercustomthm}
  {\endinnercustomthm}
\theoremstyle{definition}
\theoremstyle{remark}
\newtheorem{rem}[thm]{Remark}
\newcommand*{\longhookrightarrow}{\ensuremath{\lhook\joinrel\relbar\joinrel\rightarrow}}
\newcommand{\Sym}{\operatorname{Sym}}
\newcommand{\Ext}{\operatorname{\bigwedge}}
\newcommand{\Tplus}{\operatorname{Tot^\oplus}}
\newcommand{\Tprod}{\operatorname{Tot^\Pi}}
\let\c@equation\c@thm
\numberwithin{equation}{section}
\title[Hochschild Cohomology of the Exterior Algebra]{A Geometric Approach to Hochschild Cohomology of the Exterior Algebra}
\author{Michael Wong}
\address{Department of Mathematics, The University of Texas at Austin,
2515 Speedway, Austin, TX 78712}
\email{mwong@math.utexas.edu}
\subjclass[2010]{Primary 16E40; Secondary 16S80, 16S37 }
\keywords{Hochschild cohomology, Gerstenhaber algebra, Batalin-Vilkovisky algebra, deformation quantization, Koszul duality}
\begin{document}

\maketitle

\begin{abstract}
We give a new computation of Hochschild (co)homology of the exterior algebra, together with algebraic structures, by direct comparison with the symmetric algebra. The Hochschild cohomology is determined to be essentially the algebra of even-weight polyvector fields. From Kontsevich's formality theorem, the differential graded Lie algebra of Hochschild cochains is proved to be formal when the vector space generating the exterior algebra is even dimensional. We conjecture that formality fails in the odd dimensional case, proving this when the dimension is one. In all dimensions, formal deformations of the exterior algebra are classified by formal Poisson structures. 
\end{abstract}

\tableofcontents

\newpage

\section{Introduction} 

Hochschild (co)homology is a basic invariant of associative algebras that encodes, among other things, their deformation theory. Despite the ubiquity of the exterior algebra in mathematics, many of its Hochschild structures are still not fully understood or have resisted simple interpretation. As for what is known, computations of the Hochschild (co)homology groups of the exterior algebra and the cohomological ring structure have been given in various contexts : as a special case of the results for quantum complete intersections (\cite{BGMS}, \cite{BE}, \cite{Oppermann}), from direct combinatorial techniques \cite{HanXu}, and from algebraic Morse theory \cite{LamVav}. Recently, the Gerstenhaber bracket has been described for quantum complete intersections and their group extensions (\cite{GNW}, \cite{Grimley}), building off the work of \cite{BO} on twisted tensor products and of \cite{NW} on defining the bracket on complexes other than the bar complex. Furthermore, the general theory of \cite{LZZ} and \cite{Volkov} on Frobenius algebras with semisimple Nakayama automorphism ensures that Hochschild cohomology of the exterior algebra is a Batalin-Vilkovisky (BV) algebra. However, the algebraic results mentioned here are somewhat technical and do not address the question of deformations. Our purpose is to find an elementary geometric characterization of Hochschild (co)homology of the exterior algebra, together with its rich algebraic structures, and to classify formal deformations.

When equipped with its natural grading, the exterior algebra can be viewed as the algebra of functions on an affine supermanifold. The Hochschild (co)homology of this graded version, which is properly a symmetric algebra on odd generators, is well-understood from the odd variants of established geometric results. The Hochschild-Kostant-Rosenberg isomorphism \cite{HKR} identifies Hochschild homology with the space $\Omega$ of algebraic differential forms on odd affine space and Hochschild cohomology with the Gerstenhaber algebra $T_{\text{poly}}$ of polyvector fields. Under this equivalence, the divergence operator with respect to a volume form gives a BV structure. Moreover, Kontsevich's formality theorem \cite{Kont}, which provides a canonical deformation quantization of Poisson manifolds, has been adapted to the supermanifold setting \cite{CF}.  

We obtain a geometric picture for the Hochschild structures of the ungraded exterior algebra by direct comparison with its graded analog in the symmetric algebra. The means for our comparison is Koszul duality. Keller \cite{keller} showed that, if $A$ is a Koszul $\mathds{k}$-algebra and $A^!$ its Koszul dual, there exists an isomorphism of Hochschild cochains $C^*(A, A) \rightarrow C^*(A^!, A^!)$ in the homotopy category of $B_\infty$-algebras, inducing an isomorphism on cohomology $HH^*(A, A) \rightarrow HH^*(A^!, A^!)$ as Gerstenhaber algebras. Herscovich \cite{Hers} extended this result to the module structure of Hochschild homology, proving that the Tamarkin-Tsygan calculus of $A$ and $A^!$ are dual. Moreover, when $A$ is Calabi-Yau, Chen et al. \cite{CYS} enhanced Keller's Gerstenhaber isomorphism to one of BV algebras where $HH^*(A,A)$ and $HH^*(A^!, A^!)$ are given the BV differentials in \cite{Ginz} and \cite{Tradler}, respectively. The assumption of these theorems, however, is that Koszul duality exchanges ordinary and differential graded algebras: $A$ is assumed to be weight graded (Adams graded in the terminology of \cite{keller}) and concentrated in differential degree 0, while $A^!$ is concentrated along the bigrading diagonal (and has trivial differential). So the symmetric algebra $\Sym V$ generated by a finite dimensional vector space is Koszul dual to the symmetric algebra $\Sym(V^\vee (-1))$ on the shifted linear dual of $V$. In contrast, classical sources such as \cite{BGS} often identify the Koszul dual with the more elementary quadratic dual, which is weight graded and concentrated in differential degree 0. With this convention, $\Sym V$ is dual to the exterior algebra $\Ext V^\vee$, as in the BGG correspondence \cite{BGG}. 

Evidently, the quadratic dual $A^!_{\text{classical}}$ is obtained from the differential graded Koszul dual $A^!$ by subtracting the weight grading from the differential grading,
\[
\xymatrixcolsep{6pc} \xymatrix{
A \ar@{<->}[r]^-{\text{Koszul}}_-{\text{duality}} &  A^!  \ar@{<->}[r]^-{\text{diff. degree} \, -}_-{\text{weight}} & A^!_{\text{classical}}. \\ } 
\]
The exterior algebra $\Ext V$ is recovered in this way from $\Sym (V(-1))$. It is natural to ask how such a grading shift affects Hochschild (co)homology in general, and we give a partial answer in \S \ref{ho section}, \S \ref{coh section}, and \S \ref{BV structure}:

\begin{customthm}{A} 
With cohomological degree taken modulo $2$, there is an isomorphism of the odd-weight subcomplexes of Hochschild chains 
\[ C_*^{\text{\emph{odd}}}(A^!, A^!) \cong \Pi \, C_*^{\text{\emph{odd}}}(A^!_{\text{\emph{classical}}}, A^!_{\text{\emph{classical}}}) \] 
where $\Pi$ is the parity inversion operator. Similarly, there is an isomorphism of the even-weight subcomplexes of Hochschild cochains
\[ C^*_{\text{\emph{even}}}(A^!, A^!) \cong C^*_{\text{\emph{even}}}(A^!_{\text{\emph{classical}}}, A^!_{\text{\emph{classical}}}) \]
preserving the $B_\infty$-operations \cite{GJ}.
\end{customthm}

For the symmetric algebra in particular, the remaining subcomplexes of (co)chains become acyclic after shift, up to one dimensional factors in cohomological degree 0. Consequently, much of the geometric characterization of Hochschild (co)homology of the symmetric algebra can be transferred to the exterior algebra, as we discuss in \S \ref{BV structure}:

\begin{customthm}{B}
With cohomological degree taken modulo $2$, there is an isomorphism
\[ HH_*(\Ext V, \Ext V) \cong \mathds{k} \oplus \Pi \, \Omega_{\text{\emph{odd}}} \]
where $\Omega_{\text{\emph{odd}}}$ is the space of odd-weight differential forms on $V^\vee (1)$. Similarly, there is an isomorphism of Gerstenhaber and $BV$ algebras
\[ HH^*(\Ext V, \Ext V) \cong \begin{cases}
		T_{\text{\emph{poly}}}^{\text{\emph{even}}} & \text{if $V$ is even dimensional}  \\
		T_{\text{\emph{poly}}}^{\text{\emph{even}}} \oplus \mathds{k}  & \text{if $V$ is odd dimensional}
		\end{cases} \]
where $T_{\text{\emph{poly}}}^{\text{\emph{even}}}$ is the algebra of even-weight polyvector fields. 
\end{customthm}

In the latter statement, the Gerstenhaber bracket and BV differential are extended in a natural way to the factor $\mathds{k}$ in cohomological degree 0. Because of the way Hochschild cohomology is preserved from the symmetric algebra when $V$ is even dimensional, $L_\infty$ formality of Hochschild cochains is deduced from Kontsevich's theorem in \S \ref{formality}. We show that in general this fails when $V$ is odd dimensional.

\begin{customthm}{C}
If $V$ is even dimensional, there is an $L_\infty$ quasi-isomorphism
\[ HH^*(\Ext V, \Ext V)(1) \longrightarrow C^*(\Ext V, \Ext V)(1) \]
where both sides are given their standard differential graded Lie structures. If $dim\,V = 1$, there does not exist an $L_\infty$ quasi-isomorphism.   
\end{customthm}

Regardless, in all dimensions, formal deformations of the exterior algebra are classified by formal Poisson structures in Corollary \ref{deformation theory}, just as for the symmetric algebra.

\subsection*{Acknowledgments}

The author thanks Travis Schedler, whose ideas were the start of this project and whose advising throughout was invaluable. He thanks Lee Cohn and Dan Kaplan for helpful discussions and corrections.  

\section{Preliminaries} \label{Prelim}

Throughout, $\mathds{k}$ is a field of characteristic 0 and $\otimes = \otimes_\mathds{k}$. We use the notation $V^\vee$ for $Hom_{\mathds{k}}(V, \mathds{k})$ if $V$ is a finite dimensional $\mathds{k}$-vector space, $\cong$ for isomorphism, and $\simeq$ for quasi-isomorphism. We follow the cohomological grading convention, in which differentials have degree $+1$. The term weight grading refers to the grading in which the space of generators $V$ has degree $1$ and $V^\vee$ has degree $-1$, also called Adams grading in \cite{keller}.

\subsection{Hochschild complexes}

Let $A$ be a $\mathbb{Z}$-graded $\mathds{k}$-algebra. There are two standard definitions of Hochschild (co)-homology of $A$ with different sign conventions. On the one hand, we can consider $A$ to be a differential graded algebra with vanishing differential and, letting $A^e = A \otimes A^{\text{op}}$, work in the category of differential graded $A$-bimodules, $(A^e)^{dg}\mbox{-}mod$. Objects in this category are chain complexes $(M^{*}, d)$, the $*$ indicating differential or cohomological degree, with compatible left $A^e$-action. From this perspective, the Hochschild (co)chain complex is
\[ C_*(A) : = A \otimes_{A^e} \text{Bar}^{dg}(A), \; \; \widetilde{C}^*(A) : = Hom_{A^e}(\text{Bar}^{dg}(A), A) \]
where $(\text{Bar}^{dg}(A), d_{\text{Bar}^{dg}})$ is the two-sided bar construction of $A$,
\begin{eqnarray*} 
\text{Bar}^{dg}(A) & = & \bigoplus_{n \geq 0} A \otimes A^{\otimes n} (n) \otimes A,  \\
d_{\text{Bar}^{dg}} (a_0 [a_1 | \dots | a_n] a_{n+1} ) & = & (-1)^{|a_0|} a_0 a_1 [a_2 | \dots | a_n] a_{n+1} \\
 & + & \sum_{i = 1}^{n-1} (-1)^{|a_0| + \dots + |a_i | + i} a_0 [a_1 | \dots | a_i a_{i+1} | \dots | a_n] a_{n+1} \\
& + & (-1)^{|a_0| + \dots + |a_{n-1}| + n} a_0[a_1 | \dots | a_{n-1} ] a_n a_{n+1}. 
\end{eqnarray*}
Here, $(n)$ denotes the grading shift $M(n)^r  =M^{r+n}$, and the functors $\otimes_{A^e}$ and $Hom_{A^e}$ are differential graded. If $A$ has an auxiliary $\mathbb{Z}$-grading, such as a weight grading, then $C_*(A)$ is automatically bigraded, and one can work with the subcomplex of $\widetilde{C}^*(A)$ spanned by bihomogeneous cochains. This subcomplex, which we write as $C^*(A)$, is not in general quasi-isomorphic to $\widetilde{C}^*(A)$ but is, for example, when $A$ is concentrated in cohomological degree 0 and Koszul.  

On the other hand, we can view $A$ to be concentrated in cohomological degree 0 and interpret its given $\mathbb{Z}$-grading to be distinct, which we then call the internal grading. In this case, we consider the category of chain complexes of graded $A^e$-modules, $Ch(A^e\mbox{-}gr)$. Objects are bicomplexes $(M^{*,*}, d)$ with compatible left $A^e$-actions, where the first index is cohomological degree and the second is internal degree; the differential has bidegree $(+1,0)$. The bar construction is now bigraded, and the bigraded Koszul sign rule yields the following simpler formula for the differential:
\begin{eqnarray} \label{bar d}
\text{Bar}(A) & = & \bigoplus_{n \geq 0} A \otimes A^{\otimes n} (n) \otimes A,  \nonumber \\
d_{\text{Bar}} (a_0 [a_1 | \dots | a_n] a_{n+1} ) & = &  a_0 a_1 [a_2 | \dots | a_n] a_{n+1} \nonumber \\
 & + & \sum_{i = 1}^{n-1} (-1)^{i} a_0 [a_1 | \dots | a_i a_{i+1} | \dots | a_n] a_{n+1} \nonumber \\
& + & (-1)^n a_0[a_1 | \dots | a_{n-1} ] a_n a_{n+1}. 
\end{eqnarray}
Here, $(n)$ denotes the shift $M(n)^{r, t} = M^{r+n, t}$ in cohomological degree, while we use $[n]$ to denote the shift in internal degree, $M[n]^{r,t} = M^{r, t+n}$. The Hochschild (co)chain complex is
\[ C_{*,*}(A) : = A \otimes_{A^e} \text{Bar}(A), \; \; \widetilde{C}^{*,*}(A): = Hom_{A^e}(\text{Bar}(A), A) \]
where $\otimes_{A^e}$ and $Hom_{A^e}$ are bigraded functors. This formulation appears in \cite{kassel}, in which $A$ has $\mathbb{Z} / 2 \mathbb{Z}$ internal grading. Similarly to before, if $A$ has an auxiliary $\mathbb{Z}$-grading, such as a weight grading, then one can restrict to the subcomplex of cochains spanned by trihomogeneous elements, which we write as $C^{*,*}(A)$.

As detailed in the appendix, the first version of Hochschild (co)chains is obtained from the second by totalization functors. While algebraic structures are usually written in terms of the first version, the second has the benefit of formulas that are easier to analyze under grading shifts. Hence, explicit computations are made in \S \ref{ho section} and \S \ref{coh section} in the bigraded setting and then totalized in \S \ref{BV structure} and \S \ref{formality} to study algebraic structures. 

For any differential graded, weight-graded vector space $(M^*, d_M)$, there is not only the usual homogeneous shift $(n)$ but also a shift of the differential degree by weight. Namely, let $M(\text{wt})$ be the complex whose component $M(\text{wt})^n$ consists of elements $m \in M^{n + w(m)}$ and whose differential is
\begin{equation} \label{shearing} 
d_{M(\text{wt})}(s^{-w(m)}m) : = s^{-w(m)} d_M(m) 
\end{equation}
where $s$ is suspension of differential degree and $w(a)$ is the weight of $a$. In effect, the bicomplex $M$ is sheared by the weight grading. For a chain complex of internally graded, weight-graded vector spaces $(M^{*,*}, d_M)$, we similarly define shearing of the internal degree and denote it by $[\text{wt}]$. For notational simplicity, we will omit the suspension symbol $s$ from formulas when its presence is clear from context.

\subsection{Koszul algebras}

Let $V$ be a finite dimensional $\mathds{k}$-vector space which is concentrated in internal degree $m \in \mathbb{Z}$. Everything that follows can be stated viewing $V$ instead as differential graded. The tensor algebra on $V$ over $\mathds{k}$, denoted by $T_{\mathds{k}} V$, has a weight grading that counts the length of a monomial:
\[ w(x_1 \otimes x_2 \otimes \dots \otimes x_n) = n. \]
It also has an internal grading induced from $V$:
\[  |x_1 \otimes x_2 \otimes \dots \otimes x_n| : = m w(x_1 \otimes \dots \otimes x_n). \]
We will specifically consider algebras of the form
\begin{equation} \label{form}
 A = T_{\mathds{k}} V / I
\end{equation}
where $I$ is a weight-graded and thus internally graded ideal. Such an algebra is quadratic if $I$ is generated by a subspace $R \subset V \otimes V$,
\begin{equation} \label{quadratic form}
A = T_{\mathds{k}} V / (R),
\end{equation}
and it is Koszul if the subcomplex of $\text{Bar}(A)$ given by
\begin{eqnarray*}
 K(A)_{\text{bimod}} &: = &  \bigoplus_{n \geq 0} A \otimes W_n (n) \otimes A \\
W_{0} = \mathds{k}, & & W_{n} = \bigcap_{i + 2 + j = n} V^{\otimes i} \otimes R \otimes V^{\otimes j},
\end{eqnarray*}
is quasi-isomorphic to $\text{Bar}(A)$ and hence quasi-isomorphic to $A$ in cohomological degree 0 (see, e.g., \cite{BGS}, \cite{ButKing}). The space $W = \bigoplus W_n$ is the quadratic dual coalgebra of $A$ \cite{LodayV}, but we will not have use for the coalgebra structure. Equivalently, $A$ is Koszul if the complex of left $A$-modules
\[ K(A)_{l} : = K(A)_{\text{bimod}} \otimes_A \mathds{k}, \]
where here $\mathds{k}$ denotes the augmentation module, is quasi-isomorphic to $\mathds{k}$ in cohomological degree 0. For algebras of the form \eqref{form}, we will always restrict our attention to the subcomplexes $C^{*,*}(A)$ and $C^*(A)$ rather than the full cochain complexes.

The shearing operation can be applied to the algebra $A$ in \eqref{form} viewed as a complex of vector spaces. The result $A[\text{wt}]$ (or $A(\text{wt})$ in the differential graded setting) can be endowed with the multiplication
\[ s^{-w(a)} a \cdot s^{-w(b)} b : = s^{-w(a) - w(b)} a \cdot b \; \; \forall a, b \in A, \]   
retaining the algebra structure of $A$ but altering the internal degree of elements. While at first this change seems insignificant, the opposite algebra $A^{\text{op}}$ acquires a different structure once shifted,
\begin{eqnarray*}
s^{-w(a)} a \cdot^{\text{op}} s^{-w(b)} b  & = & (-1)^{(|a| - w(a))(|b| - w(b))} s^{-w(b)} b \cdot s^{-w(a)} a \\
& = & (-1)^{(|a| - w(a))(|b| - w(b))}  s^{-w(b) - w(a)} b \cdot a. 
\end{eqnarray*}
Accordingly, the identification of a left action of $A$ on a module $M$ with a right action of $A^{\text{op}}$ and vice versa are altered.  This is the key observation in comparing the Hochschild (co)homology of $A$ to that of $A[\text{wt}]$ and, in particular, of the symmetric algebra to that of the exterior algebra. 
 
 \subsection{The symmetric and exterior algebras}
 
The symmetric algebra $\Sym V$ is defined as the free graded commutative algebra generated by $V$. In the mold of \eqref{quadratic form}, the space of relations $R$ is spanned by graded commutators,
\[ x \otimes y - (-1)^{|x| |y|} y \otimes x, \; \; x, y \in V. \]
It is well-known that $\Sym V$ is Koszul, and the quadratic dual coalgebra has components $W_n$ that are spanned by elements 
\[ \sum_{\sigma \in S_n} (sgn(\sigma))^{|V| - 1} [x_{\sigma(1)} | x_{\sigma(2)} | \dots | x_{\sigma(n)}], \; \; x_i \in V. \]
In the context of the theorem of \cite{keller} where $|V| = 0$, the Koszul dual of $\Sym V$ is $\Sym (V^{\vee}(-1))$. In terms of algebras with internal grading, we write this as $\Sym(V^{\vee}[-1])$.

The exterior algebra $\Ext V$ is defined as the free graded anticommutative algebra generated by $V$. It is constructed with $R$ in \eqref{quadratic form} spanned by graded anticommutators,
\[ x \otimes y + (-1)^{|x| |y|} y \otimes x, \; \;  x, y \in V. \]
One easily checks the equality
\begin{equation} \label{id1} \Ext V = \Sym (V[-1])[ \text{wt} ]. \end{equation}
For example, if $V$ is concentrated in even degree, the relations for $\Ext V$ and $\Sym (V[-1])$ have the same form, 
\[ x \otimes y + y \otimes x = 0, \]
but whether this represents graded commutativity or anticommutativity depends on the internal grading. To recover the anticommutativity of $\Ext V$, one must shift the degree of the generators of $\Sym (V[-1])$ by 1, their weight.  

The identity \ref{id1} implies that the bimodule/left Koszul complex of $\Ext V$ is the weight-shifted Koszul resolution of $\Sym (V[-1])$. In fact, because nothing but the internal grading changes between $K(A)$ and $K(A [\text{wt}])$, we deduce 
\begin{prop} \label{ext koszul}
If the algebra $A$ in \eqref{form} is Koszul, then $A [\text{\emph{wt}}]$ is Koszul as well.
\end{prop}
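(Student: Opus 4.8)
The plan is to exploit the fact that the shearing operation $[\mathrm{wt}]$ alters only the internal grading, leaving the underlying cohomologically graded complexes and their differentials intact. Since Koszulity of an algebra of the form \eqref{form} is the condition that $K(A)_{\mathrm{bimod}} \hookrightarrow \mathrm{Bar}(A)$ be a quasi-isomorphism (equivalently, that $K(A)_{\mathrm{bimod}}$ be quasi-isomorphic to $A$ in cohomological degree $0$), and quasi-isomorphism is a statement about homology in the differential direction alone, I expect it to be insensitive to any relabeling of the internal degree. So the whole proof reduces to identifying the Koszul and bar complexes of $A[\mathrm{wt}]$ with the shearings of those of $A$.

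First I would observe that $A$, being Koszul, is quadratic, so $A = T_{\mathds{k}} V /(R)$ with $R \subset V \otimes V$. Because shearing preserves the multiplication of $A$ verbatim and only relabels internal degrees, $A[\mathrm{wt}]$ is again quadratic, generated by $V[\mathrm{wt}]$ with the identical space of relations $R$ (now regraded). Consequently the quadratic dual pieces $W_n = \bigcap_{i+2+j=n} V^{\otimes i} \otimes R \otimes V^{\otimes j}$ are literally the same subspaces for the two algebras, as are all the tensor powers appearing in $\mathrm{Bar}(A)$. Using that the weight of a tensor is additive, so that shearing commutes with $\otimes$, I would establish the equalities of bigraded complexes
\[ \mathrm{Bar}(A[\mathrm{wt}]) = \mathrm{Bar}(A)[\mathrm{wt}], \qquad K(A[\mathrm{wt}])_{\mathrm{bimod}} = K(A)_{\mathrm{bimod}}[\mathrm{wt}]. \]

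The content of these equalities lies in checking that the two differentials coincide, and here I would invoke the key feature of the bigraded formulation: by \eqref{bar d}, the bar differential carries only the position-dependent signs $(-1)^i$ and $(-1)^n$, with no dependence on the internal degrees $|a_j|$. Since shearing modifies nothing but those internal degrees, the sheared differential on $\mathrm{Bar}(A)[\mathrm{wt}]$ agrees with the intrinsic bar differential of $\mathrm{Bar}(A[\mathrm{wt}])$, and likewise on the Koszul subcomplex. This is the step I expect to require the most care, since shearing is defined through the suspension bookkeeping of \eqref{shearing}, and naively commuting suspension symbols past a tensor factor can generate Koszul signs; the point is precisely that the bigraded sign rule has already absorbed these, so no internal-degree-dependent signs survive to perturb the differential.

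With the identifications in hand, the conclusion is immediate. Shearing $[\mathrm{wt}]$ is an invertible regrading of the internal degree, hence an exact functor that preserves quasi-isomorphisms. Applying it to the quasi-isomorphism $K(A)_{\mathrm{bimod}} \simeq A$ furnished by Koszulity of $A$ yields $K(A[\mathrm{wt}])_{\mathrm{bimod}} \simeq A[\mathrm{wt}]$, which is exactly the assertion that $A[\mathrm{wt}]$ is Koszul. One could run the identical argument with the left-module complex $K(A)_l = K(A)_{\mathrm{bimod}} \otimes_A \mathds{k}$ in place of the bimodule complex. In particular, since $\Ext V = \Sym(V[-1])[\mathrm{wt}]$ by \eqref{id1} and $\Sym(V[-1])$ is Koszul, the proposition re-establishes the Koszulity of $\Ext V$.
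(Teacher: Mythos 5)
Your proposal is correct and is essentially the paper's own argument, which the author compresses into the single remark that nothing but the internal grading changes between $K(A)$ and $K(A[\text{wt}])$; you have simply made explicit the identifications $\mathrm{Bar}(A[\mathrm{wt}]) = \mathrm{Bar}(A)[\mathrm{wt}]$ and $K(A[\mathrm{wt}])_{\mathrm{bimod}} = K(A)_{\mathrm{bimod}}[\mathrm{wt}]$ and the observation that the bigraded differential \eqref{bar d} carries only position-dependent signs. No further comment is needed.
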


\section{Hochschild homology} \label{ho section}

Assume $A$ is of the form \eqref{form}. Under the canonical identification 
\[ C_{*,*}(A) = A \otimes_{A^e} \text{Bar}(A) \cong A \otimes_{\mathds{k}} \bigoplus_{n \geq 0} A^{\otimes n} (n), \]
the Hochschild differential $Id_{A} \otimes_{A^e} d_{\text{Bar}}$ becomes 
\begin{eqnarray} \label{Hochschild differential}
d( a[a_1 | \dots | a_n] )  & = & a a_1 [a_2 | \dots | a_n] \\
& + & \sum_{i = 1}^{n-1} (-1)^i a [a_1 | \dots | a_i a_{i + 1} | \dots | a_n] \nonumber \\
& + & (-1)^{\epsilon_n} a_n a[a_1 | \dots | a_{n-1} ] \nonumber
\end{eqnarray}
where 
\begin{eqnarray*} 
\epsilon_n  & = & n + |a_n|( |a| + |a_1| + \dots + |a_{n-1}| ) \\
& = & n + m^2 w(a_n) ( w(a) + w(a_1) + \dots + w(a_{n-1}) )
\end{eqnarray*}
on weight-homogeneous elements $a_i$. The Hochschild complex can be decomposed into even- and odd-weight subcomplexes,
\[ C_{*,*}(A) = C_{*,*}^{\text{even}}(A) \oplus C_{*,*}^{\text{odd}}(A) \]
where  $a [a_1 | \dots | a_n] \in C_{*,*}^{\text{even/odd}}(A)$ if and only if $w(a) + w(a_1) + \dots + w(a_n)$ is even/odd. We will determine how each component transforms under shearing. To start, the odd-weight subcomplex commutes with the shearing operation:

\begin{prop} \label{odd weight}
There is an isomorphism of chain complexes of bigraded vector spaces
\[ h_{*,*}: C_{*,*}^{\text{\emph{odd}}} (A[ \text{\emph{wt}} ]) \longrightarrow C_{*,*}^{\text{\emph{odd}}} (A)[ \text{\emph{wt}} ]. \]
\end{prop}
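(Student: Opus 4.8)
The plan is to take $h_{*,*}$ to be the tautological identification on symbols, $a[a_1 \mid \dots \mid a_n] \mapsto a[a_1 \mid \dots \mid a_n]$, since $A[\text{wt}]$ and $A$ share the same underlying weight-graded vector space. The whole content then becomes the claim that this one map respects the bigrading and intertwines the two differentials, \emph{after} restricting to odd total weight.

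First I would confirm that $h_{*,*}$ is a morphism of bigraded vector spaces. Cohomological degree is untouched: an $n$-fold chain sits in degree $-n$ on both sides, and neither shearing nor passage from $A$ to $A[\text{wt}]$ alters the cohomological grading. For the internal degree, write $W = w(a) + w(a_1) + \dots + w(a_n)$ for the total weight. The generators of $A[\text{wt}]$ live in internal degree $m-1$, so in $C_{*,*}(A[\text{wt}])$ the chain has internal degree $(m-1)W$; in $C_{*,*}(A)[\text{wt}]$ the same chain has internal degree $mW$ before shearing and $mW - W = (m-1)W$ after. The degrees agree, so $h_{*,*}$ is a bijection on each bigraded piece.

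Next I would compare the differentials, and here the essential preliminary observation is that shearing the Hochschild complex of $A$ leaves the differential formula \eqref{Hochschild differential} unchanged: the differential has internal degree $0$ and preserves total weight, so no internal suspension is crossed and $[\text{wt}]$ produces no Koszul sign. Consequently the differential on $C_{*,*}(A)[\text{wt}]$ still carries the sign computed from the internal degrees of $A$, namely $\epsilon_n^A = n + m^2\, w(a_n)\big(w(a) + \dots + w(a_{n-1})\big)$, whereas the differential on $C_{*,*}(A[\text{wt}])$ carries the sign computed from the internal degrees of $A[\text{wt}]$, namely $\epsilon_n^{A[\text{wt}]} = n + (m-1)^2\, w(a_n)\big(w(a) + \dots + w(a_{n-1})\big)$. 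The first and middle terms of \eqref{Hochschild differential} carry purely combinatorial signs, independent of internal degree, so they coincide on the two sides; only the final cyclic term can differ.

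The crux, and the place where the odd-weight hypothesis enters, is the mod $2$ comparison of the two signs on that final term. Their difference is
\[ \epsilon_n^A - \epsilon_n^{A[\text{wt}]} = (2m-1)\, w(a_n)\big(w(a) + \dots + w(a_{n-1})\big) \equiv w(a_n)\,\big(W - w(a_n)\big) \pmod 2, \]
since $2m-1$ is odd. On the odd-weight subcomplex $W$ is odd, so $w(a_n)\,W \equiv w(a_n)$ and hence $w(a_n)(W - w(a_n)) \equiv w(a_n) - w(a_n)^2 \equiv 0 \pmod 2$, using $w(a_n)^2 \equiv w(a_n)$. Thus $\epsilon_n^A \equiv \epsilon_n^{A[\text{wt}]}$, the two differentials agree term by term on odd-weight chains, and $h_{*,*}$ is a chain map; being a degreewise bijection, it is the desired isomorphism. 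I expect the only delicate point to be the bookkeeping in the preliminary observation, namely keeping straight that the right-hand differential is built from the internal degrees of $A$ rather than $A[\text{wt}]$ because shearing merely relabels internal degrees without altering the differential. As a sanity check, the same difference is generically nonzero when $W$ is even, which reflects that the even-weight subcomplexes are genuinely rearranged by shearing and must be handled by a separate argument.
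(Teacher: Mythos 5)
Your proposal is correct and follows essentially the same route as the paper: the map is the tautological identification (with suspension bookkeeping), and the whole content is the mod $2$ comparison of the sign $\epsilon_n$ on the last cyclic term, where your computation $(2m-1)\,w(a_n)(W-w(a_n)) \equiv 0$ for $W$ odd is the same parity observation the paper phrases as ``either $w(a_n)$ or $w(a)+\dots+w(a_{n-1})$ is even.''
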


\begin{proof} 
The map $h_{*,*}: C_{*,*}(A[\text{wt}]) \to C_{*,*}(A)[\text{wt}]$ that on weight-homogeneous elements evaluates as 
\[ h_{*,*}: s^{-w(a)}a [s^{-w(a_1)} a_1 | \dots | s^{-w(a_n)} a_n] \mapsto s^{-w(a) - \dots - w(a_n)} a[a_1| \dots | a_n] \]
is clearly a levelwise isomorphism. We need to show that its restriction to the odd-weight subcomplex is compatible with differentials, the differential of the right-hand side determined by \eqref{shearing}. The compositions $h_{*,*} \circ d_{C_{*,*}(A[\text{wt}])}$ and $d_{C_{*,*}(A)[\text{wt}]} \circ h_{*,*}$ are the same up to the sign $\epsilon_n$ in \eqref{Hochschild differential}. For $A[\text{wt}]$, the factor $m$ in $\epsilon_n$ is replaced by $m-1$. In either case, if the total weight is odd, then either
\[ w(a_n) \equiv 0 \; \; \text{or} \; \; w(a) + w(a_1) + \dots + w(a_{n-1}) \equiv 0 \; \text{mod} \; 2, \]
implying $\epsilon_n \equiv n \; \text{mod} \; 2$.
\end{proof}

The even-weight subcomplex evidently changes under shearing, but the precise effect on Hochschild homology is difficult to analyze at the generality of \eqref{form}. If $A$ is Koszul, the subcomplex
\[ A \otimes_{A^e} K_{\text{bimod}}(A) \]
is quasi-isomorphic to $C_{*,*}(A)$. Writing a generic element of $W_n (n)$ as 
\begin{equation} \label{generic element}
\sum_{x_i \in V} \lambda_{x_1, \dots, x_n} [x_1 | \dots | x_n], \; \; \lambda \in \mathds{k} 
\end{equation}
we may simplify the differential on $A \otimes_{\mathds{k}} \bigoplus W_n(n)$ to
\begin{eqnarray} \label{simple diff}
d( \sum \lambda_{x_1, \dots, x_n} a [x_1 | \dots | x_n] ) & = & \sum \lambda_{x_1, \dots, x_n} a x_1 [x_2 | \dots | x_n ] \\
& + & \sum (-1)^{\epsilon_n} \lambda_{x_1, \dots, x_n} x_n a [x_1 | \dots | x_{n-1} ] \nonumber
\end{eqnarray}
where 
\begin{eqnarray*}
\epsilon_n  =  n + |x_n| (|a| + |x_1| + \dots + |x_{n-1}|) & = & n + m^2 w(x_n) ( w(a) + w(x_1) + \dots + w(x_{n-1})) \\
& = & n + m^2( w(a) + n - 1).
\end{eqnarray*}
The complex thus identified is the left Koszul complex but with a perturbed differential; for the even-weight subcomplex of $\Sym (V)[\text{wt}]$, they actually coincide.

\begin{prop} \label{evenho}
The inclusion 
\[ \mathds{k} \hookrightarrow C_{*,*}^{\text{\emph{even}}}(\Sym (V) [ \text{\emph{wt}} ])  \]
 of $\mathds{k}$ in cohomological degree 0 is a quasi-isomorphism of chain complexes of bigraded vector spaces.
\end{prop}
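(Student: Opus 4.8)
The plan is to exploit Koszulness to reduce to a small complex and then identify the resulting differential, on the even-weight part, with a nonzero multiple of the Koszul differential of the augmentation resolution. By Proposition \ref{ext koszul}, $\Sym(V)[\text{wt}]$ is Koszul, so the subcomplex $A \otimes_{A^e} K_{\text{bimod}}(A)$ is quasi-isomorphic to $C_{*,*}(\Sym(V)[\text{wt}])$, and on it the differential is the simplified expression \eqref{simple diff} (with $m$ replaced by $m-1$, since the generators of the sheared algebra sit in internal degree $m-1$). The target for comparison is the left Koszul complex $K(A)_l = \bigoplus_n A \otimes W_n(n)$, whose differential is only the ``left contraction'' $a[x_1|\cdots|x_n] \mapsto a x_1[x_2|\cdots|x_n]$, the right-hand contraction of \eqref{simple diff} having been killed by the augmentation $\otimes_A \mathds{k}$. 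Since $A = \Sym V$ is Koszul, $K(A)_l$ is the Koszul resolution of $\mathds{k}$ and is therefore acyclic away from $\mathds{k}$ in cohomological degree $0$.

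First I would relate the two terms of \eqref{simple diff} on a generic element $a \otimes \omega$, writing $\omega = \sum_\sigma (\mathrm{sgn}\,\sigma)^{m-1}[x_{\sigma(1)} | \cdots | x_{\sigma(n)}] \in W_n$. The second term $(-1)^{\epsilon_n} x_n a[x_1 | \cdots | x_{n-1}]$ is rewritten using two facts: the multiplication inherited from $\Sym V$ gives $x_n a = (-1)^{m w(a)} a x_n$, and reindexing the sum by the cyclic permutation carrying the last slot to the first introduces the factor $(\mathrm{sgn})^{m-1}$ of an $n$-cycle, namely $(-1)^{(n-1)(m-1)}$. After this rewriting the second term becomes $(-1)^{E}$ times the first, where $E \equiv \epsilon_n + m\,w(a) + (n-1)(m-1) \pmod 2$. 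A short simplification, using $\epsilon_n \equiv n + (m-1)(w(a)+n-1)$, reduces this to $E \equiv n + w(a) \pmod 2$.

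On the even-weight subcomplex $w(a) + n$ is even, so $E \equiv 0$ and the differential becomes $d(a \otimes \omega) = 2\, a x_1 [x_2 | \cdots | x_n]$ (summed over $\sigma$), i.e. exactly twice the Koszul differential of $K(A)_l$. Because $\mathds{k}$ has characteristic $0$, the factor $2$ is invertible and does not affect homology, so the even-weight subcomplex has the same homology as the even-weight part of $K(A)_l$. As homology respects the weight grading and $K(A)_l$ resolves $\mathds{k}$, the contributions in positive even weight are acyclic while weight $0$ contributes a single copy of $\mathds{k}$ in cohomological degree $0$; this is precisely the image of the asserted inclusion, which is therefore a quasi-isomorphism.

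The main obstacle is the sign bookkeeping needed to establish $E \equiv n + w(a)$, i.e. that the two contractions in \eqref{simple diff} reinforce rather than cancel precisely on the even-weight part. A useful consistency check comes from running the identical computation for the unsheared $\Sym V$: there one finds $E \equiv 1$ for all $n$ and $w(a)$, so the differential vanishes identically, recovering the Hochschild--Kostant--Rosenberg description with zero differential. Passing to $\Sym(V)[\text{wt}]$ replaces $m$ by $m-1$ in $\epsilon_n$ alone (the commutation and reindexing signs depend only on the intrinsic degree $m$), shifting $E$ by $w(a)+n-1$ and thereby flipping the behaviour exactly on the even-weight subcomplex, which is the mechanism underlying the proposition.
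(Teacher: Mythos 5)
Your proof is correct and follows essentially the same route as the paper's: reduce to the Koszul subcomplex, compare the two contraction terms of \eqref{simple diff} on the symmetrized generators of $W_n$ via the commutation sign and the cyclic-reindexing sign, find that they reinforce on the even-weight part so the differential is twice the left Koszul differential, and conclude by Koszulness. Your sign bookkeeping (the general formula $E \equiv n + w(a)$, which the paper only verifies after restricting to $w(a)+n$ even) and the consistency check against unsheared $\Sym V$ both check out.
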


\begin{proof}
The weight $n$ component $W_n$ of the quadratic coalgebra dual to $\Sym (V)[\text{wt}]$ is the same as that for $\Sym V$ merely shifted in internal degree. A generic element is a linear combination of elements
\[ \sum_{\sigma \in S_n} sgn(\sigma)^{m - 1} [x_{\sigma(1)} | x_{\sigma(2)} | \dots | x_{\sigma(n)}], \; \; x_i \in V \]
where $m$ is the degree of $V$ before shearing. The differential \eqref{simple diff} for $\Sym (V)[\text{wt}]$ is
\begin{align} \label{differential} 
& d (\sum_{\sigma \in S_n} sgn(\sigma)^{m - 1} a[x_{\sigma(1)} | x_{\sigma(2)} | \dots | x_{\sigma(n)}] ) \\
& = \sum_{\sigma \in S_n} sgn(\sigma)^{m- 1} (1 + (-1)^{\epsilon_n'} ) a x_{\sigma(1)} [x_{\sigma(2)} | x_{\sigma(1)} | \dots | x_{\sigma(n)}] \nonumber
\end{align}
where 
\begin{eqnarray*}
\epsilon_n' & = & \epsilon_n + m^2w(a) + (n-1)(m - 1) \\
& = & n + (m-1)^2(w(a) + n-1) + m^2 w(a) + (n-1)(m - 1).
\end{eqnarray*}
The first term after $\epsilon_n$ comes from commuting $x_{\sigma(1)}$ and $a$, and the last term comes from the difference in signs of permutations $\sigma$ and $\sigma'$ related by
\begin{equation} \label{permutations} \sigma(1) = \sigma'(n),\; \sigma(2) = \sigma'(1), \dots, \sigma(n) = \sigma'(n-1).
\end{equation}  
Restricting $d$ to the even-weight subcomplex, i.e., $w(a) + n \equiv 0 \; \text{mod} \; 2$, observe
\[ \epsilon_n'  \equiv n + m-1 + m w(a) + (n-1)(m-1) \equiv m(w(a) + n) \equiv 0 \; \text{mod} \; 2, \]
so $d$ is 2 times the left Koszul differential. This complex is quasi-isomorphic to $\mathds{k}$ in cohomological degree 0 (Proposition \ref{ext koszul}). 
\end{proof}

With identity \ref{id1}, we have proven

\begin{thm} \label{ho}
There is a quasi-isomorphism of chain complexes of bigraded vector spaces
\[ \mathds{k} \oplus C_{*,*}^{\text{\emph{odd}}}(\Sym (V[-1]))[ \text{\emph{wt}} ] \simeq C_{*,*}(\Ext V). \]
If the internal degree is taken modulo $2$, the quasi-isomorphism is
\[\mathds{k} \oplus \Pi \, C_{*,*}^{\text{\emph{odd}}}(\Sym (\Pi \,V)) \simeq C_{*,*}(\Ext V) \]
where $\Pi$ is the parity inversion operator.
\end{thm}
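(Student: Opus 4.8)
The plan is to assemble the theorem from the two structural results just established, Propositions \ref{odd weight} and \ref{evenho}, using identity \eqref{id1} to realize $\Ext V$ as a sheared symmetric algebra. First I would set $A = \Sym(V[-1])$, so that $A[\text{wt}] = \Ext V$ by \eqref{id1}, and split the Hochschild complex into its weight-parity summands,
\[ C_{*,*}(\Ext V) = C_{*,*}^{\text{even}}(\Sym(V[-1])[\text{wt}]) \oplus C_{*,*}^{\text{odd}}(\Sym(V[-1])[\text{wt}]). \]
Each summand is then treated separately.

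For the odd-weight summand, Proposition \ref{odd weight} applied to $A = \Sym(V[-1])$ supplies an isomorphism of chain complexes of bigraded vector spaces
\[ C_{*,*}^{\text{odd}}(\Sym(V[-1])[\text{wt}]) \cong C_{*,*}^{\text{odd}}(\Sym(V[-1]))[\text{wt}]. \]
For the even-weight summand, I would invoke Proposition \ref{evenho} with $V$ replaced by $V[-1]$; this is legitimate because that proposition holds for any generating space concentrated in a single internal degree, and $V[-1]$ is simply $V$ placed in internal degree $m-1$. It yields that the inclusion of $\mathds{k}$ in cohomological degree $0$ is a quasi-isomorphism onto $C_{*,*}^{\text{even}}(\Sym(V[-1])[\text{wt}])$. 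Combining the two summands produces
\[ \mathds{k} \oplus C_{*,*}^{\text{odd}}(\Sym(V[-1]))[\text{wt}] \simeq C_{*,*}(\Ext V), \]
which is the first assertion.

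The second assertion is obtained by reducing the internal grading modulo $2$ and translating the two grading operations into the parity-inversion functor $\Pi$. Modulo $2$ the internal shift $[-1]$ inverts parity, so $\Sym(V[-1])$ becomes $\Sym(\Pi V)$. The shearing $[\text{wt}]$ shifts the internal degree of each element by its total weight, as in \eqref{shearing}, and on the odd-weight subcomplex this weight is odd; hence modulo $2$ the shearing acts as a uniform parity inversion $\Pi$ on the whole complex. Together these identify $C_{*,*}^{\text{odd}}(\Sym(V[-1]))[\text{wt}]$ with $\Pi\, C_{*,*}^{\text{odd}}(\Sym(\Pi V))$, while the factor $\mathds{k}$ sits in internal degree $0$ and is unaffected, giving the second quasi-isomorphism.

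The argument is essentially an assembly of the preceding propositions, so I do not expect a serious analytic obstacle; the one point demanding care is the modulo-$2$ bookkeeping in the last paragraph. The main subtlety is verifying that the combined effect of the internal shift and the weight-shearing collapses to a single application of $\Pi$ on the odd-weight part — specifically, that the weight-dependence of the shearing in \eqref{shearing} becomes constant, equal to $1$ mod $2$, precisely because the odd-weight subcomplex is homogeneous of odd total weight. Confirming this, and checking that the differentials still match under the identifications of Propositions \ref{odd weight} and \ref{evenho} after reduction, completes the proof.
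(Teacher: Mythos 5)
Your proposal is correct and follows exactly the route the paper intends: the theorem is stated immediately after Propositions \ref{odd weight} and \ref{evenho} with the remark ``With identity \ref{id1}, we have proven,'' i.e., the proof is precisely the assembly of the even/odd weight decomposition, Proposition \ref{evenho} applied to $\Sym(V[-1])[\text{wt}]$, and Proposition \ref{odd weight}, together with the mod-$2$ bookkeeping you carry out in the last paragraph. No gaps.
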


\section{Hochschild cohomology} \label{coh section}

Again assume $A$ is of the form \eqref{form}. Our computation of Hochschild cohomology parallels the previous section. The Hochschild codifferential is
\[ d(f) : = (-1)^{n} f \circ d_{\text{Bar}}, \; \; f \in Hom_{A^e}(A \otimes A^{\otimes n} (n) \otimes A, A). \] Under the canonical identification
\[ Hom_{A^e} (\text{Bar}(A), A) \cong Hom_{\mathds{k}}( \bigoplus_{n \geq 0} A^{\otimes n} (n), A ), \]
the codifferential evaluates on a weight-homogeneous cochain $f$ as
\begin{eqnarray} \label{codiff} 
d(f)([a_1 | a_2 | \dots | a_n]) & = &  (-1)^{\delta_n} a_1 f([a_2 | \dots | a_n]) \\
& + & \sum_{i = 1}^{n-1} (-1)^{i+n} f([a_1| \dots | a_i a_{i+1} | \dots | a_n]) \nonumber \\
& + & f([a_1| \dots | a_{n-1}]) a_n \nonumber
\end{eqnarray}
where $\delta_n = n + |a_1| |f| =n + m^2 w(a_1) w(f)$. The Hochschild cochain complex can be decomposed into even- and odd-weight subcomplexes,
\[ C^{*,*}(A) = C^{*,*}_{\text{even}}(A) \oplus C^{*,*}_{\text{odd}}(A) \]
where $f \in C^{*,*}_{\text{even}/\text{odd}} (A)$ if and only if 
\[ w(f) = w( f([a_1 | \dots | a_n]) ) - w([a_1 | \dots | a_n]) \; \; \text{is even/odd}. \]
This time, the even-weight subcomplex commutes with the shearing operation:

\begin{prop} \label{even weight}
There is an isomorphism of chain complexes of bigraded vector spaces
\[  h^{*,*}: C^{*,*}_{\text{\emph{even}}}(A[ \text{\emph{wt}} ]) \longrightarrow C^{*,*}_{\text{\emph{even}}}(A)[\text{\emph{wt}} ]. \] 
\end{prop}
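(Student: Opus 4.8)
The plan is to run the argument of Proposition \ref{odd weight} in cohomological guise, exploiting that the sole internal-degree-dependent sign in the codifferential \eqref{codiff} is $\delta_n = n + m^2 w(a_1) w(f)$. First I would define $h^{*,*}$ on trihomogeneous cochains by the same ``collect the suspensions'' recipe used for $h_{*,*}$. A cochain $\tilde{f}$ for $A[\text{wt}]$ has a well-defined underlying $\mathds{k}$-linear map $f \in Hom_{\mathds{k}}(\bigoplus_{n \geq 0} A^{\otimes n}(n), A)$, obtained by stripping the suspensions from inputs and output; since shearing preserves weight, $f$ carries the common weight $w(f) = w(\tilde{f})$, and I would set $h^{*,*}(\tilde{f}) := s^{-w(f)} f \in C^{*,*}(A)[\text{wt}]$. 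Because the underlying linear maps agree and the trigrading on each side is pinned down by the weight, this is a levelwise isomorphism by inspection, and it plainly preserves the even-weight subcomplex; all the content lies in compatibility with the codifferentials.

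Next I would compare the two codifferentials across the square, $h^{*,*} \circ d_{C^{*,*}(A[\text{wt}])}$ versus $d_{C^{*,*}(A)[\text{wt}]} \circ h^{*,*}$, with the target differential given by the internal-degree analog of \eqref{shearing}. The multiplication of $A[\text{wt}]$ equals that of $A$ on underlying elements, so the three families of terms in \eqref{codiff}, namely $a_1 f(\cdots)$, the contractions $f(\cdots a_i a_{i+1} \cdots)$, and $f(\cdots) a_n$, have the same underlying values for $\tilde{f}$ and $f$; the weight-independent signs $(-1)^{i+n}$ and the sign-free last term are unchanged as well. Once the suspension-collection and the altered $A^{\text{op}}$-action are accounted for exactly as in Proposition \ref{odd weight}, the two composites agree up to the single residual sign $\delta_n$, which for $A[\text{wt}]$ has $m$ replaced by $m-1$.

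The crux is then immediate: on the even-weight subcomplex $w(f)$ is even, so both $m^2 w(a_1) w(f)$ and $(m-1)^2 w(a_1) w(f)$ vanish modulo $2$, whence $\delta_n \equiv n \pmod 2$ in either grading. The two composites therefore coincide and $h^{*,*}$ is an isomorphism of complexes. The same computation explains why one passes to the even part here, dually to the odd part for chains: for $w(f)$ odd the two versions of $\delta_n$ differ by $w(a_1) \pmod 2$, since $m^2 \not\equiv (m-1)^2 \pmod 2$, so $h^{*,*}$ would fail to be a chain map on $C^{*,*}_{\text{odd}}$.

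I expect the main obstacle to be not this final sign count but the bookkeeping underlying the phrase ``accounted for exactly as in Proposition \ref{odd weight}'': one must verify that re-deriving \eqref{codiff} for $A[\text{wt}]$, whose $A^{\text{op}}$-multiplication and hence right-module identification are altered by shearing, together with the suspensions collected by $h^{*,*}$, leaves no Koszul sign beyond $\delta_n$ when $d_{\text{Bar}}$ is pushed through $f$ and re-expressed via the left and right actions. This is the exact dual of the calculation already carried out for chains, and once the suspension conventions are fixed to match those of $h_{*,*}$, the proposition follows.
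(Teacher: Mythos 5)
Your proposal is correct and follows essentially the same route as the paper: define $h^{*,*}(f) = s^{-w(f)}\bar{f}$ by collecting suspensions, observe it is a levelwise isomorphism, and note that the only internal-degree-dependent sign in \eqref{codiff} is $\delta_n = n + m^2 w(a_1)w(f)$, which is congruent to $n$ modulo $2$ for both $m$ and $m-1$ whenever $w(f)$ is even. Your closing remark on why the odd-weight part fails is a correct (and welcome) elaboration beyond what the paper records.
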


\begin{proof}
A weight-homogeneous map $f \in Hom_{\mathds{k}}((A[\text{wt}])^{\otimes n} (n), A[\text{wt}])$ 
determines an element $\bar{f} \in Hom_{\mathds{k}}(A^{\otimes n}(n), A)$ evaluating as
\[ \bar{f} ([a_1 | \dots | a_n]) = s^{w(f) + w(a_1) + \dots + w(a_n)} f([ s^{-w(a_1)} a_1 | \dots | s^{-w(a_n)} a_n]). \]
The map $h^{*,*}: C^{*,*}(A[\text{wt}]) \to C^{*,*}(A)[\text{wt}]$ defined by
\[ h^{*,*}: f \mapsto s^{-w(f)} \bar{f} \]
is clearly a levelwise isomorphism. We need to show that its restriction to the even-weight subcomplex is compatible with differentials, the differential of the right-hand side determined by \eqref{shearing}. The compositions $h^{*,*} \circ d_{C^{*,*}(A[\text{wt}])}$ and $d_{C^{*,*}(A)[\text{wt}]} \circ h^{*,*}$ are the same up to the sign $\delta_n$ in \eqref{codiff}. For $A[\text{wt}]$, the factor $m$ in $\delta_n$ is replaced by $m-1$. In either case, if $w(f) = w(\bar{f})$ is even, then $\delta_n \equiv n \; \text{mod} \; 2$.  
\end{proof}

Analogous to the situation of Hochschild homology, the odd-weight subcomplex changes under shearing, but the effect on cohomology is difficult to measure in general. If $A$ is Koszul, the cochain complex
\[ Hom_{A^e}( K_{\text{bimod}}(A), A) \cong Hom_{\mathds{k}} \big( \bigoplus_{n \geq 0} W_n (n), A \big) \]
is quasi-isomorphic to $C^{*,*}(A)$. In the notation \eqref{generic element}, the codifferential simplifies to 
\begin{eqnarray} \label{koszul codiff}
df \big( \sum \lambda_{x_1, \dots, x_n} [x_1 | \dots | x_n] \big) & = & \sum (-1)^{\delta_n} \lambda_{x_1, \dots, x_n} x_1 f( [x_2 | \dots | x_n ]) \\
& + & \sum \lambda_{x_1, \dots, x_n} f([x_1 | \dots | x_{n-1} ]) x_n \nonumber
\end{eqnarray}
where $\delta_n = n + |x_1| |f| = n + m^2 w(f)$. This complex is reminiscent of the left Koszul complex but with a perturbed differential. Indeed, for $A = \Sym (V)[\text{wt}]$, the odd-weight subcomplex is precisely the odd-weight part of the Koszul complex. Let $\text{det}(V)$ be the determinant line, i.e., the highest weight component of $\Ext V$ if $|V|$ is even or of $\Sym V$ if $|V|$ is odd. 

\begin{prop} \label{oddcoh}
Let $N = \text{dim} \, V$ and $m = |V|$. 
\begin{enumerate}
\item If $N$ is even, $C^{*,*}_{\text{\emph{odd}}} (\Sym (V) [\text{\emph{wt}}])$ is acyclic.
\item If $N$ and $m$ are odd, the inclusion
\[ \text{det}(V)[N] \longhookrightarrow C^{*,*}_{\text{\emph{odd}}} (\Sym (V) [\text{\emph{wt}}] ) \]
is a quasi-isomorphism.
\item If $N$ is odd and $m$ is even, there is a quasi-isomorphism 
\[ \text{det}(V[1])^\vee(-N)  \simeq C^{*,*}_{\text{\emph{odd}}} (\Sym (V)[\text{\emph{wt}}]). \]
\end{enumerate} 
\end{prop}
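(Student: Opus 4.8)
The plan is to pass to the Koszul model, identify the odd-weight subcomplex with a classical Koszul complex after a sign analysis, and then read off its cohomology by comparison with the Koszul resolution of the residue field. Since $\Sym(V)[\text{wt}]$ is Koszul by Proposition \ref{ext koszul}, the complex $Hom_{\mathds{k}}(\bigoplus_n W_n(n),\Sym V)$ with codifferential \eqref{koszul codiff} computes $C^{*,*}(\Sym(V)[\text{wt}])$. The coalgebra $W_n$ depends on the parity of $m$: for $m$ even the sign $sgn(\sigma)^{m-1}=sgn(\sigma)$ makes $W_n\cong\Ext^n V$, while for $m$ odd it is the symmetrization $W_n\cong\Sym^n V$. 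Dualizing, the cochain complex becomes, as a bigraded vector space, $\Ext^\bullet V^\vee\otimes\Sym^\bullet V$ when $m$ is even and $\Sym^\bullet V^\vee\otimes\Sym^\bullet V$ when $m$ is odd (in the latter case $\Sym^\bullet V$ is finite-dimensional, being an exterior algebra). A cochain $f\colon W_n\to\Sym^k V$ has weight $w(f)=k-n$, and the codifferential raises both $n$ and $k$ by one, hence preserves $w$; the odd-weight subcomplex is therefore the direct sum over odd $w$ of the finite complexes with $k-n=w$.

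The crucial step, which I expect to be the main obstacle, is to show that on the odd-weight subcomplex the perturbed codifferential \eqref{koszul codiff} agrees, up to an invertible scalar (in fact $2$), with the Koszul differential $\kappa=\sum_l (e_l^\vee\wedge)\otimes(e_l\cdot)$ for $m$ even and $\kappa=\sum_l (e_l^\vee\cdot)\otimes(e_l\wedge)$ for $m$ odd. This is the cohomological analogue of Proposition \ref{evenho}: one must check that the two terms $(-1)^{\delta_n}x_1 f(\cdots)$ and $f(\cdots)x_n$ of \eqref{koszul codiff} add rather than cancel. Combining $\delta_n=n+(m-1)^2 w(f)$ with the sign produced by commuting the outer generator past $f(\cdots)$ in $\Sym V[\text{wt}]$ and with the sign coming from the (anti)symmetry of the generator of $W_n$, the assumption that $w(f)$ is odd forces the total discrepancy to vanish modulo $2$, so the two terms coincide. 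This is the only point at which oddness of the weight is used.

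It then remains to compute the cohomology of $(\Ext^\bullet V^\vee\otimes\Sym^\bullet V,\kappa)$, resp. $(\Sym^\bullet V^\vee\otimes\Sym^\bullet V,\kappa)$. Applying the determinant duality $\Ext^n V^\vee\cong\Ext^{N-n}V\otimes\det(V)^\vee$ (resp. $\Sym^k V\cong\Sym^{N-k}V^\vee\otimes\det V$, using that $\Sym V$ is Frobenius when $m$ is odd), the wedge factor of $\kappa$ turns into a contraction and carries the complex isomorphically onto the standard Koszul resolution of the residue field $\mathds{k}$ over $\Sym V$ (resp. over $\Sym V^\vee$), twisted by $\det(V)^\vee$ (resp. by $\det V$). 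Since the generators form a regular sequence, this resolution is exact except in homological degree $0$, where its homology is $\mathds{k}$. Untwisting, the cohomology is one-dimensional: it is $\det(V)^\vee$ sitting at $(n,k)=(N,0)$, in cohomological degree $N$ and weight $-N$, when $m$ is even; and it is $\det V$ sitting at $(n,k)=(0,N)$, in cohomological degree $0$ and weight $N$, when $m$ is odd.

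The three cases now follow by parity. The unique cohomology class has weight $-N$ (resp. $N$), which is odd exactly when $N$ is odd; hence for $N$ even the odd-weight subcomplex is acyclic, proving (1). For $N$ and $m$ odd the class is $\det V$ in cohomological degree $0$, realized by the cocycle $Hom(W_0,\Sym^N V)=\det V$, and matching internal degrees identifies the inclusion with $\det(V)[N]\longhookrightarrow C^{*,*}_{\text{odd}}(\Sym(V)[\text{wt}])$, proving (2). For $N$ odd and $m$ even the class is $\det(V)^\vee$ in cohomological degree $N$ and weight $-N$; tracking the internal degree through the shearing and the shift $(N)$ rewrites it as $\det(V[1])^\vee(-N)$, proving (3).
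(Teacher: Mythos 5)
Your proposal is correct and follows essentially the same route as the paper: pass to the Koszul model, verify by the sign count that the two boundary terms of \eqref{koszul codiff} reinforce (giving $2$ times the Koszul differential) exactly when $w(f)$ is odd, convert the complex into a (twisted) Koszul resolution via Frobenius/determinant duality on $\Sym^\bullet(V[1])$ for $m$ even and on the finite-dimensional target for $m$ odd, and then locate the one-dimensional cohomology class in weight $\mp N$ to get the parity trichotomy. The sign computation you flag as the main obstacle does work out as you assert ($\delta_n' \equiv 1 + w(f) \bmod 2$), and your identification of the surviving class with $\det(V[1])^\vee(-N)$ resp. $\det(V)[N]$ matches the paper's.
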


\begin{proof}
Recall that a generic element of $W_n$ for $\Sym(V)[\text{wt}]$ is a linear combination of elements 
\[ \sum_{\sigma \in S_n} sgn(\sigma)^{m - 1} [x_{\sigma(1)} | x_{\sigma(2)} | \dots | x_{\sigma(n)}], \; \; x_i \in V. \]
The codifferential \eqref{koszul codiff} evaluates on such elements as
\begin{align*}
& df \big( \sum_{\sigma \in S_n} sgn(\sigma)^{m - 1} [x_{\sigma(1)} | \dots | x_{\sigma(n)}]  \big) \\
& = \sum_{\sigma \in S_n} sgn(\sigma)^{m - 1} ( (-1)^{\delta_n'} + 1) f([x_{\sigma(1)} | \dots | x_{ \sigma(n-1)} ] ) x_{\sigma(n)}
\end{align*}
where
\begin{eqnarray*}
\delta_n' & = & \delta_n + (n-1)(m - 1) + m^2 (n-1 + w(f)) \\
& = & n + (m-1)^2 w(f) + (n-1)(m-1) + m^2 (n-1 + w(f)).
\end{eqnarray*}
The first term after $\delta_n$ comes from the difference in signs of two permutations $\sigma$ and $\sigma'$ as in \eqref{permutations}, and the last term comes from commuting $x_{\sigma(n)}$ and $f( [x_{\sigma(1)} | \dots | x_{\sigma(n-1)}] )$. If $w(f)$ is odd, then 
\begin{align*}
& df \big( \sum_{\sigma \in S_n} sgn(\sigma)^{m - 1} [x_{\sigma(1)} | \dots | x_{\sigma(n)}]  \big) \\
& = 2 \sum_{\sigma \in S_n} sgn(\sigma)^{m - 1} f( [x_{\sigma(1)} | \dots | x_{ \sigma(n-1)} ] ) x_{\sigma(n)}.
\end{align*}
This formula can be written more suggestively as follows. The component $W_n$ is isomorphic as a vector space to the $n^{\text{th}}$-symmetric power of $V[1]$,
\begin{equation} \label{koszul identification}
\sum_{\sigma \in S_n} sgn(\sigma)^{m-1} [x_{\sigma(1)} | \dots | x_{\sigma(n)} ] \in W_n \longleftrightarrow x_1 \dots x_n \in \Sym^n (V[1]).
\end{equation}
In these terms, the codifferential is
\begin{eqnarray} \label{suggestive}
df( x_1 \dots x_n) & = & 2 \sum_{i =1}^n (-1)^{(m-1)(n-i)} f(x_1 \dots \hat{x}_i \dots x_n )x_i \\
& = &  (-1)^{(m-1)(n-1)} 2 \sum_{i =1}^n f \big( \frac{\partial}{\partial x_i} \big(x_1 \dots x_i \dots x_n \big)\big) x_i . \nonumber
\end{eqnarray}

Since $\Sym^n(V[1])$ is finite dimensional, we have the canonical isomorphism 
\begin{equation} \label{hom identity} Hom_{\mathds{k}} (\Sym^n (V[1])(n), \Sym (V)[ \text{wt} ]) \cong \Sym (V)[ \text{wt} ] \otimes \Sym^n(V[1])^\vee (-n).
\end{equation}
If $m$ is even, then $\Sym (V[1])$ is a graded Frobenius algebra: we have the identification
\begin{equation} \label{Frob duality}
\Sym^n (V[1])^\vee \cong \Sym^{N-n}(V[1])[N(m-1)] 
\end{equation}
depending on a choice of an element in $\text{det}(V[1]) =\Sym^N(V[1])$. With this isomorphism,
\begin{equation} \label{frob}
Hom_{\mathds{k}} (\Sym^n (V[1])(n), \Sym (V)[ \text{wt} ]) \cong \Sym (V)[ \text{wt} ] \otimes \Sym^{N-n}(V[1])[N(m-1)](-n).
\end{equation}
The graded vector space underlying this complex is the same as that underlying the shifted left Koszul resolution $K(\Sym(V)[\text{wt}])_l [N(m-1)](-N)$ written in terms of \eqref{koszul identification}. Notice, however, that the subcomplex on the right-hand side corresponding to the odd-weight subcomplex on the left depends on $N$: if $N$ is even, it is the odd-weight subcomplex, but if $N$ is odd, the parity of weight is reversed. In either case, it is clear that, pushed through these isomorphisms, the codifferential \eqref{suggestive} becomes
\[ d(p \otimes x_1 \dots x_{N-n}) = (-1)^{(m-1)(n-1)} 2 \sum_{i = 1}^{N-n} p x_i \otimes \frac{\partial}{\partial x_i}\big(x_1 \dots x_i \dots x_{N-n}\big), \]
precisely $2$ times the left Koszul differential. The odd-weight part of the shifted left Koszul resolution is acyclic, while the even-weight part is quasi-isomorphic to $\mathds{k}[N(m-1)](-N)$. Reversing \eqref{Frob duality}, we identify this one-dimensional factor with the dual of the shifted determinant line, $\Sym^N(V[1])^\vee(-N) = \text{det} (V[1])^\vee(-N) $, in the original complex \eqref{hom identity}.

If instead $m$ is odd, then $\Sym (V)[\text{wt}]$ is graded Frobenius, and similarly to before, it is isomorphic to its dual $\Sym(V)[\text{wt}]^\vee [N(1-m)]$:
\begin{align} \label{odd duality}
Hom_{\mathds{k}} (\Sym^n (V[1])(n), \Sym (V)[ \text{wt} ]) \\ 
\cong \Sym (V)[\text{wt}]^\vee [N(1-m)] & \otimes \Sym^n (V[1])^\vee (-n) \nonumber \\
\cong \Big( \Sym (V) [\text{wt}] [N(m-1)]  & \otimes \Sym^n (V[1]) (n) \Big)^\vee. \nonumber
\end{align}
The graded vector space underlying this complex is the same as that underlying
\[ Hom_{\mathds{k}}( K(\Sym(V)[\text{wt}])_l [N(m-1)], \mathds{k} ). \] 
Once again, the parity of weight is preserved by the first isomorphism in \eqref{odd duality} if $N$ is even and reversed if $N$ is odd. In either case, the codifferential \eqref{suggestive} through these identifications is the dual of
\[ d(p \otimes x_1 \dots x_n) = (-1)^{(m-1)(n-1)} 2 \sum_{i = 1}^{n} p x_i \otimes \frac{\partial}{\partial x_i}\big(x_1 \dots x_i \dots x_{n}\big), \] 
the left Koszul differential. Since the functor $Hom_{\mathds{k}}( \cdot, \mathds{k})$ is exact, the cohomology of $K(\Sym(V)[\text{wt}])_l[N(m-1)]$ is simply the linear dual of the cohomology of \eqref{odd duality}. We thus trace $\mathds{k}[N(m-1)]$ back to the shifted determinant line $\text{det}(V)[N]$.
\end{proof}

With identity \ref{id1}, we have proven

\begin{thm} \label{coh} Let $N = \text{dim} \, V$ and $m = |V|$. There are quasi-isomorphisms of chain complexes of bigraded vector spaces
\[ C^{*,*}_{\text{\emph{even}}}(\Sym (V[-1]))[ \text{\emph{wt}} ] \simeq C^{*,*} (\Ext V) \]
if $N$ is even;
\[  C^{*,*}_{\text{\emph{even}}}(\Sym (V[-1]))[ \text{\emph{wt}}] \oplus \mathds{k}[-Nm]  \simeq C^{*,*} (\Ext V) \]
if $N$ is odd and $m$ is even; and
\[  C^{*,*}_{\text{\emph{even}}}(\Sym V[-1])[ \text{\emph{wt}}] \oplus \mathds{k}[Nm](-N) \simeq  C^{*,*} (\Ext V) \]
if $N$ and $m$ are odd. If the the internal degree is taken modulo $2$, the shifts $[\text{\emph{wt}}]$ are trivial.
\end{thm}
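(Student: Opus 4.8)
The plan is to specialize the two structural results of this section to $A = \Sym(V[-1])$ and reassemble them. By identity \ref{id1} we have $\Ext V = A[\text{wt}]$, hence $C^{*,*}(\Ext V) = C^{*,*}(A[\text{wt}])$, and I would first split this into its even- and odd-weight subcomplexes as in the decomposition preceding Proposition \ref{even weight}. The even-weight summand is immediate: Proposition \ref{even weight} applies with this $A$ and gives $C^{*,*}_{\text{even}}(A[\text{wt}]) \cong C^{*,*}_{\text{even}}(\Sym(V[-1]))[\text{wt}]$, which is the first factor in every case of the statement.

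For the odd-weight summand the key point is that $C^{*,*}_{\text{odd}}(A[\text{wt}]) = C^{*,*}_{\text{odd}}(\Sym(W)[\text{wt}])$ with generator space $W = V[-1]$, so Proposition \ref{oddcoh} applies directly once one records that $\dim W = N$ while $|W| = m+1$, i.e. the substitution reverses the parity of the degree relative to $m$. Thus if $N$ is even the summand is acyclic by case (1) and only the even factor survives; if $N$ is odd and $m$ is even then $|W|$ is odd, so case (2) gives a one-dimensional cohomology $\text{det}(W)[N]$; and if $N$ and $m$ are both odd then $|W|$ is even, so case (3) gives $\text{det}(W[1])^\vee(-N)$.

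It remains to pin down both gradings on these lines and match them to the displayed shifts of $\mathds{k}$. Reading the bidegrees off the proof of Proposition \ref{oddcoh}, in the $m$ even case one has $\text{det}(W) = \Sym^N(V[-1])$ in internal degree $N(m+1)$ and cohomological degree $0$, so the internal shift $[N]$ lands it in internal degree $Nm$, agreeing with $\mathds{k}[-Nm]$. In the $m$ odd case $W[1] = V$, so $\text{det}(W[1])^\vee = \text{det}(V)^\vee$ sits in internal degree $-Nm$, while the cohomological shift $(-N)$ places it in cohomological degree $N$, agreeing with $\mathds{k}[Nm](-N)$. Adding the even factor to each of these yields the three quasi-isomorphisms. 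For the last assertion, every cochain of $C^{*,*}_{\text{even}}(\Sym(V[-1]))$ has even weight by definition, so $[\text{wt}]$ shears internal degree by an even amount on each homogeneous piece and becomes the identity once internal degree is read modulo $2$.

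The only real obstacle I foresee is the degree bookkeeping of this third step. The determinant-line outputs of Proposition \ref{oddcoh} are normalized for the generator space $W$, and one must carry the substitution $W = V[-1]$—which flips $|W|$ against $m$ and shifts internal degrees by the top weight $N$—through both the internal shift $[N]$ and the cohomological shift $(-N)$ to reach precisely $\mathds{k}[-Nm]$ and $\mathds{k}[Nm](-N)$. The error-prone subtlety is that the surviving class lies in cohomological degree $0$ when $m$ is even but in degree $N$ when $m$ is odd, a discrepancy inherited from whether the Frobenius duality in the proof of Proposition \ref{oddcoh} is taken on the coefficient algebra or on the Koszul factor.
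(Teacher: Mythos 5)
Your proposal is correct and follows exactly the paper's route: the theorem is obtained by splitting $C^{*,*}(\Ext V)=C^{*,*}(\Sym(V[-1])[\text{wt}])$ into weight parities, applying Proposition \ref{even weight} to the even part and Proposition \ref{oddcoh} (with generator space $V[-1]$, whose internal degree is $m+1$, hence the parity swap) to the odd part. Your degree bookkeeping identifying $\det(V[-1])[N]$ with $\mathds{k}[-Nm]$ and $\det(V)^\vee(-N)$ with $\mathds{k}[Nm](-N)$ is accurate, and in fact supplies details the paper leaves implicit.
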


\section{Algebraic structure on Hochschild cohomology} \label{BV structure}

Continuing with the assumption that $A$ is of the form \eqref{form}, suppose $|V| = 1$, so the internal grading of $A$ coincides with weight and $A[\text{wt}]$ is concentrated in degree 0. Thus far, we have worked with the Hochschild (co)chain complexes bigraded by cohomological and internal degrees. We now totalize the complexes to study algebraic structures, for which $A$ should be considered a differential graded algebra. This is achieved explicitly by the isomorphisms $F$ and $G$ in \eqref{iso 1} and \eqref{iso 3}. Combining them with the maps in Propositions \ref{odd weight} and \ref{even weight}, we have isomorphisms of differential graded vector spaces 
\begin{eqnarray*}
h_*: = F \circ \Tplus (h_{*,*}) \circ F^{-1} : C_*^{\text{odd}}(A(\text{wt})) \longrightarrow C_*^{\text{odd}}(A)(\text{wt}), \\
h^*: = G \circ \Tplus(h^{*,*}) \circ G^{-1}: C_{\text{even}}^* (A(\text{wt})) \longrightarrow C_{\text{even}}^*(A) (\text{wt}).
\end{eqnarray*}
Suppressing the suspension symbol, we see in particular that $h^*$ maps a weight-homogeneous $f \in C^n_{\text{even}}(A(\text{wt}))$ to the element in $C_{\text{even}}(A)(\text{wt})^n$ defined by
\begin{equation} \label{h evaluation} 
h^*(f)([a_1 | \dots | a_n]) = (-1)^{(n-1)w(a_1) + (n-2)w(a_2) + \dots + w(a_{n-1})} f([a_1 | \dots | a_n]).
\end{equation}
It is understood that the $a_i$ on the left are viewed as elements of $A$ and those on the right are viewed as elements of $A(\text{wt})$. We will show that $h^*$ preserves the algebraic structure on Hochschild cochains, including the usual operations descending to cohomology.

\subsection{Gerstenhaber structure} \label{Gers structure}

Recall that for an arbitrary differential graded algebra $A$, the cup product of two Hochschild cochains  $f \in C^{n}(A)$ and $g \in C^{p}(A)$ is defined by
\[ f \cup g ([a_1 | \dots | a_{n + p}]) = (-1)^{ p(|a_1| + \dots + |a_n| + n)} f( [a_1 | \dots | a_n]) g([a_{n+1} | \dots | a_{n + p}]). \] 
The $k^{\text{th}}$ brace operation on cochains $f_i \in C^{n_i}(A)$ is defined as
\[ f_0 \{f_1, \dots, f_k \} : = \sum_{1 \leq j_1 < \dots < j_k \leq  n_0} f_0 \circ_{j_1, \dots, j_k} (f_1 , \dots, f_k) \] 
where $\circ_{j_1, \dots, j_k }$ denotes inserting $f_i$ into slot $j_i$ of $f_0$. Together, the cup product and brace operations satisfy certain higher homotopical identities and give an action of the operad $B_\infty$ on $C^*(A)$ \cite{GJ}. The induced Lie bracket on the shifted complex $C^*(A)(1)$,
\[ [ f_0, f_1] := f_0 \{f_1\} - (-1)^{(n_0 - 1)(n_1 - 1)} f_1 \{f_0 \}, \]
is the Gerstenhaber bracket, originally studied in \cite{Gers}. 

For $A$ in our desired form \eqref{form}, these operations evidently preserve the weight grading and so restrict to $C^*_{\text{even}}(A)$. The sheared complex $C^*_{\text{even}}(A)(\text{wt})$ retains the operations from the unshifted complex,  
\begin{eqnarray*}
s^{-w(f)} f \cup s^{-w(g)} g & := & s^{-w(f) - w(g)} f \cup g  \\
s^{-w(f_0)} f_0 \{ s^{-w(f_1)}f_1, \dots, s^{-w(f_n)} f_n\} & := & s^{-w(f_0) - \dots - w(f_n)} f_0 \{ f_1, \dots, f_n \}.
\end{eqnarray*} 

\begin{prop} \label{B infinity}
The map $h^*$ is an isomorphism of $B_\infty$-algebras.
\end{prop}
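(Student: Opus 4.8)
The plan is to leverage that $h^*$ is \emph{already} a levelwise isomorphism: combining the bigraded isomorphism of Proposition \ref{even weight} with the totalization isomorphism $G$ exhibits $h^*$ as an isomorphism of differential graded vector spaces, so it automatically commutes with the Hochschild codifferential. Since a strict morphism of $B_\infty$-algebras is exactly a chain map intertwining the cup product and every brace operation \cite{GJ}, it remains only to establish
\[ h^*(f \cup g) = h^*(f) \cup h^*(g), \qquad h^*\big(f_0\{f_1,\dots,f_k\}\big) = h^*(f_0)\{h^*(f_1),\dots,h^*(f_k)\}, \]
where on the left the operations are those of $C^*_{\text{even}}(A(\text{wt}))$ and on the right those of $C^*_{\text{even}}(A)(\text{wt})$. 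Because $h^*$ acts diagonally, through the single-cochain twist \eqref{h evaluation}, on each tensor factor of the bar construction, compatibility with the deconcatenation coproduct is immediate; the entire content is a comparison of signs.

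Writing \eqref{h evaluation} as $h^*(f)([a_1|\dots|a_n]) = (-1)^{\kappa(a_1,\dots,a_n)} f([a_1|\dots|a_n])$ with $\kappa(a_1,\dots,a_n) = \sum_{i=1}^{n}(n-i)\,w(a_i)$, the governing combinatorial fact is the behavior of $\kappa$ under concatenation of bar words,
\[ \kappa(a_1,\dots,a_{n+p}) \equiv \kappa(a_1,\dots,a_n) + \kappa(a_{n+1},\dots,a_{n+p}) + p\,w([a_1|\dots|a_n]) \pmod 2. \]
Since $|V|=1$ forces $|a|=w(a)$, the elements of $A$ become degree $0$ in $A(\text{wt})$; hence the Koszul prefactor of the cup product on $C^*(A(\text{wt}))$ collapses to $(-1)^{pn}$, whereas on $C^*(A)$ it is $(-1)^{p(w([a_1|\dots|a_n])+n)}$. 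Substituting these two prefactors together with the concatenation identity, the exponents on the two sides of the cup-product relation cancel termwise, so $h^*$ respects $\cup$; notably this part uses no parity hypothesis at all.

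For the brace operations I would first treat a single operadic insertion $f_0 \circ_j f_1$ and then assemble $f_0\{f_1,\dots,f_k\}$ by the analogous multi-insertion bookkeeping (or by induction on $k$). Inserting $f_1$ into slot $j$ reindexes the bar word, so the discrepancy between $\kappa$ of the assembled word and the $\kappa$'s of the pieces must be matched against two sources of sign: the Koszul factor of the type $(-1)^{|f_1|(|a_1|+\dots+|a_{j-1}|)}$, which is present for the unsheared braces on $C^*(A)$ but trivial on $C^*(A(\text{wt}))$ because the $a_i$ are now degree $0$; and the shift of cochain degree, since shearing sends $f_i \mapsto s^{-w(f_i)}f_i$. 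Here the even-weight hypothesis is decisive: on $C^*_{\text{even}}(A)$ each $w(f_i)$ is even, so the totalized degree of every cochain has the same parity before and after shearing, and the weight $w(f_i([a_j|\dots]))$ produced by an insertion differs from the weights $w(a_j)+\cdots$ it consumes by the even number $w(f_i)$. I expect the brace identity — reconciling the reindexing of $\kappa$ with the disappearance of the $a_i$-Koszul signs under shearing, while invoking even weight to keep the $f_i$-degree parities fixed — to be the main obstacle, the cup-product case being essentially formal.
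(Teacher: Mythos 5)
Your proposal is correct and follows essentially the same route as the paper: a direct comparison of the twist sign in \eqref{h evaluation} against the Koszul prefactors of the cup product and brace insertions, with the brace case handled by analogous bookkeeping (the paper likewise disposes of it with ``similar computation''). One correction: the cup-product case is \emph{not} free of the parity hypothesis --- the sheared cup product is defined by transport from $C^*(A)$, where the Koszul prefactor is governed by the total degree $p + w(g)$ of $\bar{g}$ rather than by $p$, so writing it as $(-1)^{p(w([a_1|\dots|a_n])+n)}$ already invokes $w(g)$ even; this is precisely where the paper reduces $\kappa_\cup$ modulo $2$.
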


\begin{proof}
Let $f \in C^n_{\text{even}}(A(\text{wt}))$ and $g \in C^p_{\text{even}}(A(\text{wt}))$. Observe
\begin{eqnarray*} 
h^*(f) \cup h^*(g) ([a_1 | \dots | a_{n+p}]) & = & (-1)^{\kappa_\cup} h^*(f) ([a_1 | \dots | a_n]) h^*(g) ([a_{n+1} | \dots | a_{n+p} ]) \\
& = & (-1)^{\kappa_\cup + \kappa_m + \kappa_n} f([a_1 | \dots | a_n]) g([a_{n+1} | \dots | a_{n + p}]) 
\end{eqnarray*}
where
\begin{eqnarray*}
\kappa_\cup & = & (w(a_1) + \dots + w(a_n) + n)(p +w(g)) \equiv ( w(a_1) + \dots + w(a_n) + n)p  \; \; \text{mod} \; 2,  \\
\kappa_n & = & (n-1) w(a_1) + (n-2) w(a_2) + \dots + w(a_{n-1}), \\
\kappa_p & = & (p-1) w(a_{n+1})+ (p-2)w(a_{n+2}) + \dots + w(a_{n + p-1}).
\end{eqnarray*}
But this precisely equals
\begin{align*} 
h^*(f \cup g)([a_1 | \dots | a_{n + p}]) & =  (-1)^{(n +p - 1)w(a_1) + \dots + w(a_{n + p-1})} f \cup g ([a_1 | \dots | a_{n + p}]) \\
& = (-1)^{np + (n +p - 1)w(a_1) + \dots + w(a_{n + p-1})} f([a_1 | \dots | a_n]) g([a_{n+1} | \dots | a_{n + p} ]).
\end{align*}
Hence, $h^*$ preserves the cup product. Similar computation shows $h^*$ respects all the brace operations.
\end{proof}

\begin{cor} \label{partial calculus}
The map $h^*$ descends to an isomorphism of Gerstenhaber algebras
\[ HH^*_{\text{\emph{even}}} (A(\text{\emph{wt}}) ) \longrightarrow HH^*_{\text{\emph{even}}} (A) (\text{\emph{wt}}). \]
\end{cor}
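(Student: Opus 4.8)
The plan is to deduce the corollary formally from Proposition \ref{B infinity}, treating it as the passage of a cochain-level $B_\infty$-isomorphism to the induced Gerstenhaber structure on cohomology. First I would record that $h^*$, being by construction $G \circ \Tplus(h^{*,*}) \circ G^{-1}$ with $h^{*,*}$ the isomorphism of Proposition \ref{even weight}, is an isomorphism of differential graded vector spaces; in particular it is a chain map, so it induces an isomorphism of graded vector spaces $HH^*_{\text{even}}(A(\text{wt})) \to HH^*_{\text{even}}(A)(\text{wt})$ on cohomology. This settles the underlying linear isomorphism, and all that remains is to check compatibility with the two algebraic operations.

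Next I would recall how the Gerstenhaber algebra structure on $HH^*$ arises from the $B_\infty$-operations at the cochain level. The cup product is a morphism of complexes up to the Hochschild differential (it satisfies the Leibniz identity $d(f \cup g) = df \cup g \pm f \cup dg$), so it descends to an associative, graded-commutative product on cohomology; the Gerstenhaber bracket $[f_0, f_1] = f_0\{f_1\} - (-1)^{(n_0-1)(n_1-1)} f_1\{f_0\}$ is built from the first brace operation and, by the $B_\infty$-identities, is a chain map on the shifted complex $C^*_{\text{even}}(A)(1)$, so it too descends and makes the shift a graded Lie algebra. The same description applies verbatim to the sheared complex, since by the definitions in \S \ref{Gers structure} the operations on $C^*_{\text{even}}(A)(\text{wt})$ are precisely those transported from the unsheared complex.

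Then the key input is Proposition \ref{B infinity}: $h^*$ intertwines the cup products and all the brace operations on the two cochain complexes. Because $h^*$ also commutes with the differentials, it sends cocycles to cocycles and coboundaries to coboundaries, so the induced map on cohomology carries the class of $f \cup g$ to that of $h^*(f) \cup h^*(g)$ and the class of $[f_0, f_1]$ to that of $[h^*(f_0), h^*(f_1)]$. Hence the cohomology isomorphism preserves both the product and the bracket, i.e. it is an isomorphism of Gerstenhaber algebras, which is the claim.

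I do not expect a genuine obstacle here, since the statement is a formal consequence of the cochain-level result. The only point requiring care is the bookkeeping of the suspension and shift conventions: one must confirm that the operations defining the Gerstenhaber structure on the sheared target $HH^*_{\text{even}}(A)(\text{wt})$ are exactly those induced by the transported cup and brace operations on $C^*_{\text{even}}(A)(\text{wt})$, so that \emph{preserving the $B_\infty$-operations} on cochains genuinely specializes to \emph{preserving the Gerstenhaber operations} on cohomology. This is guaranteed by the definitions already fixed in \S \ref{Gers structure}, so the descent is immediate once Proposition \ref{B infinity} is in hand.
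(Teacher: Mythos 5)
Your argument is correct and is exactly the route the paper intends: the corollary is stated without proof as an immediate consequence of Proposition \ref{B infinity}, since a cochain-level isomorphism of $B_\infty$-algebras (commuting with the differentials) induces an isomorphism of the Gerstenhaber structures that the cup product and first brace operation define on cohomology. Your write-up simply makes explicit the standard descent argument the paper leaves implicit.
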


Having related Hochschild (co)homology of the exterior algebra to that of the symmetric algebra in Theorems \ref{ho} and \ref{coh}, we can provide a geometric interpretation through the Hochschild-Kostant-Rosenberg (HKR) formalism \cite{HKR}. Within the Hochschild cochain complex of $A = \Sym V$ where $|V| = 0$ or $1$ is the subcomplex $D_{\text{poly}} = D_{\text{poly}}(V^\vee)$ consisting of polydifferential operators, namely, the span of tensor products of algebraic differential operators on $A$. The cohomology of $D_{\text{poly}}$ is the space of algebraic polyvector fields, $T_{\text{poly}}  = T_{\text{poly}}(V^\vee) : = \Sym_{A} Der(A)$ where $Der(A)$ is the space of $\mathds{k}$-linear derivations of $A$. With symmetric product $\bullet$ and Schouten-Nijenhuis bracket $[\cdot, \cdot]_{\text{SN}}$, polyvector fields form a Gerstenhaber algebra. In fact, the composition
\begin{equation} \label{HKR map} 
\text{HKR}: T_{\text{poly}} \to D_{\text{poly}} \hookrightarrow C^*(A) , \; \ \gamma_1 \bullet \dots \bullet \gamma_n \mapsto \frac{1}{n!} \sum_{\sigma \in S_n} \epsilon_{\sigma} \gamma_{\sigma(1)} \otimes \dots \otimes \gamma_{\sigma(n)},
\end{equation}
where $\epsilon_\sigma$ is determined by Koszul sign rule, is a quasi-isomorphism of differential graded vector spaces, and the induced isomorphism on cohomology $T_{\text{poly}} \to HH^*(A)$ is one of Gerstenhaber algebras.

Dually, the HKR theorem identifies the Hochschild homology of the symmetric algebra as algebraic differential forms. Specifically, if $\Omega_K$ denotes the space of K\"{a}hler forms of $A$, then the map
\[ \text{HKR}: C_*(A) \to \Omega(V^\vee) : = \Sym_{A} \Omega_{K}, \; \; a_0 [a_1 | \dots | a_n] \mapsto \frac{1}{n!} a_0 da_1 \dots da_n, \]
induces an isomorphism $HH_*(A) \to \Omega$. Here, $\Omega$ is viewed as having trivial differential.

The spaces $D_{\text{poly}}$, $T_{\text{poly}}$, and $\Omega$ are naturally weight-graded and so decompose into even- and odd-weight subspaces. 

\begin{thm} \label{calculus} Suppose $|V| = 0$ and let $N = \text{dim} \, V$.
\begin{enumerate}
\item There is an isomorphism of differential graded vector spaces
\[  HH_*(\Ext V) \longrightarrow \mathds{k} \oplus \Omega_{\text{\emph{odd}}} (V^\vee(1))(\text{\emph{wt}}). \]
\item If $N$ is even, there is an isomorphism of Gerstenhaber algebras
\[ \Big(T_{\emph{poly}}^{\emph{even}}(V^\vee(1))(\text{\emph{wt}}), \, \bullet \, , [\cdot, \cdot]_{\text{\emph{SN}}} \Big) \longrightarrow \Big( HH^*(\Ext V), \cup, [\cdot, \cdot] \Big). \]
\item If $N$ is odd, there is an isomorphism of Gerstenhaber algebras 
\[  \Big( T_{\emph{poly}}^{\emph{even}}(V^\vee(1))(\text{\emph{wt}}) \oplus \Ext^N V,  \, \bullet \, , [\cdot, \cdot]_{\text{\emph{SN}}} \Big) \longrightarrow \Big( HH^*(\Ext V), \cup, [\cdot, \cdot] \Big) \]
where on the left the symmetric product and Schouten-Nijenhuis bracket are extended according to the rules
\begin{eqnarray*} \label{cup} \forall \, \gamma \in \Ext^N V, & & \alpha \bullet \gamma =  
	\begin{cases}
		\alpha \gamma & \text{if} \; \alpha \in \mathds{k} \subset T_{\text{\emph{poly}}}^{\text{\emph{even}}}(\text{\emph{wt}})^0 \\
		0 & \text{otherwise}
	\end{cases} \\
 & & [\alpha, \gamma ]_{\text{\emph{SN}}} =
	\begin{cases}
		(h^*)^{-1}\alpha \, (\gamma) & \text{if} \; \alpha \in T_{\text{\emph{poly}}}^{\text{\emph{even}}}(\text{\emph{wt}})^1 \\
		0 & \text{otherwise}. 
	\end{cases} \end{eqnarray*}
\end{enumerate}
\end{thm}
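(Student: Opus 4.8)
\section*{Proof proposal}

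The plan is to read each of the three statements off the bigraded comparison theorems of \S\ref{ho section} and \S\ref{coh section}, totalize, feed the symmetric-algebra side into the classical HKR identifications, and then treat the one extra cohomology class of the odd-dimensional case by a direct cochain-level computation. Parts (1) and (2) are essentially a matter of assembling results already in hand; part (3) is where the real work lies. Throughout, $m = |V| = 0$ is even, so in Theorem \ref{coh} the shifts $[\text{wt}]$ become the differential-degree shears $(\text{wt})$ after totalization.

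For part (1), I would begin with the quasi-isomorphism $\mathds{k} \oplus C_{*,*}^{\text{odd}}(\Sym(V[-1]))[\text{wt}] \simeq C_{*,*}(\Ext V)$ of Theorem \ref{ho}. Totalizing and invoking Proposition \ref{odd weight} identifies the nontrivial summand with $C_*^{\text{odd}}(\Sym(V[-1]))(\text{wt})$ as differential graded vector spaces; the homology HKR map then carries this to $\Omega_{\text{odd}}(V^\vee(1))(\text{wt})$, viewed with trivial differential. Since the remaining $\mathds{k}$ sits in cohomological degree $0$, this yields the asserted isomorphism of differential graded vector spaces, with no further structure to check.

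Part (2) is the cohomological analog. With $N$ even, Theorem \ref{coh} gives $C^{*,*}(\Ext V) \simeq C^{*,*}_{\text{even}}(\Sym(V[-1]))[\text{wt}]$, so that by Proposition \ref{oddcoh}(1) the odd-weight cohomology vanishes and $HH^*(\Ext V) = HH^*_{\text{even}}(\Ext V)$. Corollary \ref{partial calculus} then supplies a Gerstenhaber isomorphism $HH^*_{\text{even}}(\Ext V) \cong HH^*_{\text{even}}(\Sym(V[-1]))(\text{wt})$, and the even-weight, sheared HKR map \eqref{HKR map} identifies the target with $T_{\text{poly}}^{\text{even}}(V^\vee(1))(\text{wt})$ as Gerstenhaber algebras. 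Composing inverses yields the map of the statement, and it respects both cup product and bracket because each constituent does, by Proposition \ref{B infinity} and the classical HKR theorem.

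For part (3), with $N$ odd the same chain of isomorphisms produces the summand $T_{\text{poly}}^{\text{even}}(V^\vee(1))(\text{wt})$, but Theorem \ref{coh} and Proposition \ref{oddcoh} now contribute one extra cohomology class, which under identity \ref{id1} is the top exterior power $\Ext^N V$, of odd weight $N$ and cohomological degree $N$. The remaining task is to compute the genuine cup product and Gerstenhaber bracket of a polyvector-field class $\alpha$ against this class $\gamma$. I would fix the explicit cocycle representative for $\gamma$ coming from the Koszul--Frobenius duality of Proposition \ref{oddcoh} and evaluate the cup and brace formulas of \S\ref{Gers structure} directly. A weight count does most of the work: since $\gamma$ has top weight $N$ while cup product and insertion both add weights, $\alpha \cup \gamma$ and $[\alpha, \gamma]$ can be nonzero only when $\alpha$ has weight $0$, forcing all the \emph{otherwise} cases to vanish; in the surviving cases $\alpha$ is a constant in degree $0$ or a linear vector field in degree $1$, and one checks that the product is scalar multiplication and the bracket is the action of the derivation $(h^*)^{-1}\alpha$ on $\gamma$, exactly as stated. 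I expect this last computation --- carrying the signs and the Frobenius identification of $\gamma$ faithfully through the totalization --- to be the main obstacle, precisely because $\gamma$ is the part of $HH^*(\Ext V)$ invisible to Corollary \ref{partial calculus}, so its operations cannot simply be imported from the symmetric algebra.
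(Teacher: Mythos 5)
Your proposal is correct and follows essentially the same route as the paper: parts (1) and (2) are assembled from Theorem \ref{ho}, Corollary \ref{partial calculus}, and the HKR maps exactly as you describe, and part (3) reduces to a degree-and-weight comparison. The one simplification the paper makes in (3) is that no explicit cocycle representative for $\gamma$ is needed: since $HH^*_{\text{odd}}(\Ext V)$ is one-dimensional, concentrated in cohomological degree $0$ and weight $N$, the classes $\alpha \cup \gamma \in HH^{n}_{\text{odd}}(\Ext V)$ and $[\alpha, \gamma] \in HH^{n-1}_{\text{odd}}(\Ext V)$ vanish outright unless $n = 0$, $w(\alpha) = 0$ (respectively $n = 1$, $w(\alpha) = 0$) --- this is your weight count, but carried out on cohomology rather than on cochains, so the sign-tracking you anticipate as the main obstacle never arises.
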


\begin{proof}
The first statement is a result of Theorem \ref{ho}, the isomorphism $h_*$, and the HKR isomorphism. The second statement follows from Corollary \ref{partial calculus} and the HKR isomorphism. This also establishes the isomorphism of the even-weight subspaces in the third statement. The remaining assertion in the third statement follows from comparing cohomological degree and weight. If $\alpha \in HH^n_{\text{even}}(\Ext V)$, then $\alpha \cup \gamma \in HH^n_{\text{odd}}(\Ext V)$, which is trivial unless $n = 0$ and $w(\alpha) = 0$. Also $[\alpha, \gamma] \in HH^{n-1}_{\text{odd}}(\Ext V)$, which is trivial unless $n = 1$.
\end{proof}

This statement in particular recovers the results regarding the additive and cup product structure in \cite{BGMS}, \cite{BE}, \cite{Oppermann}, \cite{HanXu} and \cite{LamVav}, as well as the results about the Gerstenhaber structure in \cite{GNW} and \cite{Grimley}.

\begin{rem} \label{Hochschild koszul duality}
As promised by Keller's theorem \cite{keller}, the Koszul dual algebras $\Sym(V(-1))$ and $\Sym (V^\vee)$ have isomorphic Hochschild cohomology as Gerstenhaber algebras. The isomorphism is easily written in coordinates.  If $\{x_1^\vee, \dots, x_n^\vee \}$ is a basis of $V^\vee$ and $\{ \xi_1, \dots, \xi_n \}$ is the shifted dual basis in $V(-1)$, it is
\begin{equation} \label{koszul interchange}
T_{\text{poly}}(V^\vee (1)) \longleftrightarrow T_{\text{poly}}(V), \; \; \xi_{j_1} \dots \xi_{j_t} \frac{\partial}{\partial \xi_{i_1}} \dots \frac{\partial}{\partial \xi_{i_s}} \longleftrightarrow x_{i_1}^\vee \dots x_{i_s}^\vee \frac{\partial}{\partial x_{j_1}^\vee} \dots \frac{\partial}{\partial x_{j_t}^\vee}. 
\end{equation}
That is, Koszul duality interchanges the ``polynomial" and ``polyvector" components of Hochschild cohomology. We may compose this isomorphism with the isomorphisms in statements $(2)$ and $(3)$ of Theorem \ref{calculus} to relate the cohomology of $\Ext V$ to $T_{\text{poly}}^{\text{even}}(V)$, polyvector fields on ungraded affine space.
\end{rem}

The theorem provides an explicit basis for $HH^*(\Ext V)$ that is convenient for algebraic computations. Letting $\{x_1, \dots, x_n\}$ be a basis for $V$, we may write the cohomological basis as the set of all polyvector fields of the form
\[ x_{j_1} \dots x_{j_t}  \frac{\partial}{\partial x_{i_1}} \dots \frac{\partial}{\partial x_{i_s}}, \;  \; 1 \leq j_1 < \dots < j_t \leq n, \; \; 1 \leq i_1 \leq \dots \leq i_s \leq n,  \; \; s + t \equiv 0 \; \text{mod} \; 2 \] 
corresponding to 
$\xi_{j_1} \dots \xi_{j_t}  \frac{\partial}{\partial \xi_{i_1}} \dots \frac{\partial}{\partial \xi_{i_s}} \in T_{\text{poly}}(V^\vee (1))$, augmented by a chosen determinant form in $\Ext^N V$ when $N$ is odd.  Under the identification $h^*$ in \eqref{h evaluation} and the HKR map, these basis elements are represented by cocycles in $C^*(\Ext V)$ evaluating as
\begin{eqnarray} \label{cocycle evaluation}
x_{j_1} \dots x_{j_t} \frac{1}{n!} \sum_{\sigma \in S_n} \frac{\partial}{\partial x_{i_{\sigma(1)}}} \otimes \dots \otimes \frac{\partial}{\partial x_{i_{\sigma(s)}}}([a_1 | \dots | a_n]) \\
= (-1)^{(n-1) w(a_1) + \dots + w(a_{n-1})} x_{j_1} \dots x_{j_t} \frac{1}{n!} \sum_{\sigma \in S_n} \frac{\partial}{\partial x_{i_{\sigma(1)}}}(a_1) \dots  \frac{\partial}{\partial x_{i_{\sigma(s)}}}(a_n) \nonumber
\end{eqnarray}
on weight-homogeneous elements $a_i$ of $\Ext V$.
\subsection{BV structure}

As is well known, the space of polyvector fields is a Batalin-Vilkovisky (BV) algebra. Specifically, there is a square-zero, cohomological degree $-1$ operator $\Delta$ recovering the Gerstenhaber bracket by the identity
\[ [f, g] = \Delta(fg) - \Delta(f)g - (-1)^{|f|} f \Delta(g). \]
For $T_{\text{poly}}(V^\vee (1))$, the BV operator is the divergence operator with respect to a chosen determinant form $\omega \in \Sym (V(-1))$. If in coordinates $\omega = \xi_1 \xi_2 \dots \xi_N$, then
\begin{equation} \label{divergence}
\text{div}_\omega = \sum_{i = 1}^N \frac{\partial}{\partial \xi_i} \frac{\partial}{\partial \theta_i}, \; \; \; \theta_i = \frac{\partial}{\partial \xi_i}. 
\end{equation}
The BV operator for $T_{\text{poly}}(V)$ has the same form under the isomorphism \eqref{koszul interchange}.

Recent work by Volkov \cite{Volkov} and, independently, by Lambre et al. \cite{LZZ} demonstrates the existence of BV structures on the Hochschild cohomology of certain Frobenius algebras. Recall that a Frobenius algebra $A$ has a nondegenerate bilinear form $\langle \cdot, \cdot \rangle : A \otimes A \to \mathds{k}$ with a distinguished algebra automorphism $\nu$, called the Nakayama automorphism, determined by
\[ \langle a, \cdot \rangle = \langle \cdot, \nu(a) \rangle \; \; \forall \, a \in A. \]  
The above-mentioned results specifically concern Frobenius algebras with semisimple Nakayama automorphism. Volkov generalizes the BV structure in \cite{Tradler} for algebras with symmetric, nondegenerate inner products, which is conjectured to coincide with BV structures arising from string topology \cite{ChasSul}. The authors of \cite{LZZ} show that the Connes operator on Hochshild homology with coefficients twisted by the Nakayama automorphism dualizes to a BV operator. The two BV structures are expected to agree generally.

The exterior algebra $\Ext V$ is a basic example of a Frobenius algebra with semisimple Nakayama automorphism. When $\text{dim} \, V$ is odd, the automorphism $\nu$ is trivial, but when $\text{dim} \, V$ is even, it detects the parity of weight,
\[ \nu (a) = (-1)^{w(a)} a. \]
Since $\text{div}_\omega$ is weight-homogeneous, the identification in Theorem \ref{calculus} immediately imparts a BV structure to $HH^*_{\text{even}}(\Ext V)$ recovering the Gerstenhaber structure. When $N$ is odd, the operator $\text{div}_\omega$, having cohomological degree $-1$, is extended by zero to $\Ext^N V$. It is straightforward to check that the resulting BV structure on the entire Hochschild cohomology $HH^*(\Ext V)$ is the one described by general formulas in \cite{Volkov} and \cite{LZZ}. 

\begin{thm} \label{BV}
Suppose $|V| = 0$. Let $N = \text{dim} \, V$, and let $\text{\emph{div}}_{\omega}$ be the divergence operator associated to a determinant form $\omega \in \Sym (V(-1))$ restricted to $T_{\text{\emph{poly}}}^{\text{\emph{even}}}(V^\vee (1))$. 
\begin{enumerate}
\item If $N$ is even, $HH^*(\Ext V)$ is isomorphic as a BV algebra to
\[ \Big(T_{\emph{poly}}^{\emph{even}}(V^\vee(1))(\text{\emph{wt}}), \, \bullet \, , \text{\emph{div}}_{\omega}  \Big). \]
\item If $N$ is odd, $HH^*(\Ext V)$ is isomorphic as a BV algebra to
\[ \Big( T_{\emph{poly}}^{\emph{even}}(V^\vee(1))(\text{\emph{wt}}) \oplus \Ext^N V, \, \bullet \, , \text{\emph{div}}_{\omega} \Big) \]
where $\text{\emph{div}}_{\omega}$ is extended by 0 to $\Ext^N V$.
\end{enumerate}
In either case, the induced Gerstenhaber structure is the one of Theorem \ref{calculus}.
\end{thm}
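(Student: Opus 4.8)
The plan is to transport $\text{div}_\omega$ across the Gerstenhaber isomorphism of Theorem \ref{calculus}, verify it is a genuine BV operator there, and then identify it with the canonical operator of \cite{Volkov} and \cite{LZZ}. First I would note that $\text{div}_\omega$ is weight-preserving — it lowers the number of $\xi$-factors and of $\partial/\partial\xi$-factors each by one — so it restricts to $T_{\text{poly}}^{\text{even}}(V^\vee(1))$; being the restriction of a square-zero, degree $-1$ operator that generates the full Schouten--Nijenhuis bracket to a subspace closed under $\bullet$ and $[\cdot,\cdot]_{\text{SN}}$, it is automatically a BV operator generating the restricted bracket. Together with Theorem \ref{calculus}(2) this yields statement (1) up to the identification below. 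For statement (2) I would separately verify the BV identity with the extended operations of Theorem \ref{calculus}(3): since those are nonzero only in cohomological degrees $0$ and $1$, while $\text{div}_\omega$ has degree $-1$ and is extended by zero on $\Ext^N V$, the identity $[f,g]=\text{div}_\omega(f\bullet g)-\text{div}_\omega(f)\bullet g-(-1)^{|f|}f\bullet \text{div}_\omega(g)$ collapses to a handful of cases that all vanish for degree reasons.

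The substance of the theorem is that this transported operator agrees with the canonical one. The BV operators of \cite{Volkov} and \cite{LZZ} are obtained by carrying the Connes operator $B$ from the ($\nu$-twisted) Hochschild homology of $\Ext V$ to its cohomology along the Poincar\'e duality defined by the Frobenius pairing. I would compute $B$ on the homology side through the chain-level isomorphism $h_*$ of Theorem \ref{ho} and the HKR isomorphism for the symmetric algebra: $B$ preserves total weight (it inserts the weight-zero unit and sums cyclically), hence preserves the odd-weight subcomplex, and under HKR it becomes the de Rham differential $d$ on $\Omega_{\text{odd}}$, up to normalization. The classical identity relating $d$ to the divergence under contraction with the volume form, $\iota_\omega \circ \text{div}_\omega = \pm\, d \circ \iota_\omega$, then shows that the Poincar\'e-dual of $B$ is precisely $\text{div}_\omega$ on polyvector fields. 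Alternatively, one can evaluate the explicit Volkov/LZZ formula directly on the cocycle basis of \eqref{cocycle evaluation} and compare term by term; this more computational route sidesteps the duality but requires grinding through signs.

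The step I expect to be the main obstacle is precisely that duality bookkeeping: confirming that the Frobenius pairing on $\Ext V$ corresponds, after the shearing $(\text{wt})$ and parity inversion $\Pi$ of Theorems \ref{ho} and \ref{coh}, to contraction with $\omega \in \Sym(V(-1))$ on the symmetric side, with the correct degree shifts and signs. The Nakayama twist must be tracked carefully — it is trivial when $N$ is odd, so $\Ext V$ is symmetric Frobenius and Tradler's formula applies unshifted, and it equals $(-1)^{w}$ when $N$ is even, requiring the twisted homology of \cite{LZZ}. In the odd-dimensional case I would further check that the duality pairs the bottom class $\mathds{k}\subset HH_*(\Ext V)$ with $\Ext^N V \subset HH^*(\Ext V)$, on which $B$ and the extended $\text{div}_\omega$ both vanish, so that extension by zero is forced rather than merely consistent. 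Once these identifications are pinned down, the concluding assertion that the induced Gerstenhaber structure is the one of Theorem \ref{calculus} follows immediately from the BV identity.
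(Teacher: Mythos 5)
Your overall route coincides with the paper's (whose entire justification is the paragraph preceding the theorem): $\text{div}_\omega$ is weight-homogeneous, so the Gerstenhaber isomorphism of Theorem \ref{calculus} immediately transports the BV structure of $T_{\text{poly}}$ to $HH^*_{\text{even}}(\Ext V)$; one extends by zero on $\Ext^N V$ when $N$ is odd; and the agreement with the general formulas of \cite{Volkov} and \cite{LZZ} is asserted to be ``straightforward to check.'' You supply more detail than the paper on that last point --- the Connes operator, the Nakayama twist $\nu(a)=(-1)^{w(a)}a$ for $N$ even versus trivial $\nu$ for $N$ odd, and the identity $\iota_\omega\circ\text{div}_\omega=\pm\, d\circ\iota_\omega$ --- all of which is a reasonable elaboration of what the paper leaves implicit.

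The one step that does not survive scrutiny as written is your claim that, for $N$ odd, the BV identity on the extended algebra ``collapses to a handful of cases that all vanish for degree reasons.'' Take $f=\alpha\in T_{\text{poly}}^{\text{even}}(V^\vee(1))(\text{wt})^1$ and $g=\gamma\in\Ext^N V$. Then $\alpha\bullet\gamma=0$ (since $\alpha\notin\mathds{k}$) and $\text{div}_\omega(\gamma)=0$ by the extension by zero, so the right-hand side of the BV identity reduces to $-\,\text{div}_\omega(\alpha)\bullet\gamma$, in which the scalar component of $\text{div}_\omega(\alpha)$ multiplies $\gamma$ nontrivially (e.g.\ $\alpha=\xi_i\,\partial/\partial\xi_i$ has $\text{div}_\omega(\alpha)=1$); meanwhile the left-hand side is $[\alpha,\gamma]_{\text{SN}}=(h^*)^{-1}\alpha\,(\gamma)$, which is likewise nonzero for such $\alpha$, since the corresponding derivation of $\Ext V$ rescales the determinant class. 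Neither side vanishes for degree reasons. What is actually required is the identity that the action of the derivation corresponding to $\alpha$ on $\Ext^N V$ equals, up to the sign fixed by the conventions of \eqref{koszul interchange} and \eqref{cocycle evaluation}, $-\text{div}_\omega(\alpha)$ times the identity --- the Lie-derivative-of-a-volume-form computation. This is precisely the content that makes the extension by zero compatible with the extended bracket of Theorem \ref{calculus}(3), so it needs to be computed rather than dismissed; once it is, the rest of your argument goes through.
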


Thus, when $N$ is even, the exterior algebra provides a simple example of a BV structure coming from a duality with twisted Hochschild homology (\cite{LZZ}). The result here was independently discovered by Weiguo Lv in a forthcoming paper, but his methods are along the lines of \cite{HanXu} and are very different.

\section{Formality theorems and deformation theory} \label{formality}

Assume for now that $\mathds{k} = \mathbb{R}$ and $|V| = 0$. In broad terms, the Hochschild cochain complex $C^*( \mathcal{O}(\mathbb{R}^n))$ of algebraic functions has the special property of being algebraically formal. Kontsevich \cite{Kont} proved that the HKR map \eqref{HKR map} extends to an $L_\infty$ quasi-isomorphism between the shifted complexes $HH^*(\mathcal{O}(\mathbb{R}^n))(1)$ and $C^*(\mathcal{O}(\mathbb{R}^n))(1)$ given their standard differential graded Lie structures. Tamarkin \cite{Tamarkin} exhibited a functorial $G_\infty$ structure on $C^*(\mathcal{O}(\mathbb{R}^n))$ and the existence of a $G_\infty$ quasi-isomorphism extending the HKR map. Recently, Willwacher \cite{Willwacher} and Campos \cite{Campos} proved versions of formality involving homotopy braces and homotopy BV structures.

Catteneo and Felder \cite{CF} adapted Kontsevich's $L_\infty$ formality morphism to the setting of supermanifolds, a result stated implicitly in the original paper \cite{Kont}. As for the ungraded case, the morphism consists of a sequence of linear maps
\[ U_n: T_{\text{poly}}(V^\vee(1))(1)^{\otimes n} \longrightarrow D_{\text{poly}}(V^\vee(1))(1) \] 
satisfying the $L_\infty$ homotopical identities. The first component $U_1$ is the HKR map \eqref{HKR map}, and the higher components $U_n$ are weighted summations over Kontsevich graphs $\Gamma$ \cite{Kont} of maps $D_\Gamma$. In coordinates $\xi_1, \dots, \xi_N$, for $V(-1)$, the $D_\Gamma$ are defined by
\begin{equation} \label{CF formality} D_\Gamma(\gamma_1, \dots, \gamma_n)(f_1, \dots, f_m) = \pi \mu \Big( \prod_{(i,j) \in \Gamma} \sum_{ k = 1}^N \frac{\partial}{\partial \theta_k^{(i)}} \otimes \frac{\partial}{\partial \xi_k^{(j)}}\Big) (\gamma_1 \otimes \dots \otimes \gamma_n \otimes f_1 \otimes \dots \otimes f_m )  
\end{equation}
where 
\begin{itemize}
\item $\Gamma$ has $n$ vertices of type I and $m$ vertices of type II;
\item $\theta_k = \frac{ \partial}{\partial \xi_k}$, $\gamma_p \in T_{\text{poly}}(V^\vee(1))$ for all $1 \leq p \leq n$, and $f_t \in \Sym (V(-1))$ for all $1 \leq t \leq m$;
\item $(i)$ and $(j)$ mean the the operators $\frac{\partial}{\partial \theta_k^{(i)}}$ and $\frac{\partial}{\partial \xi_k^{(j)}}$ are applied to the $i^{\text{th}}$ and $j^{\text{th}}$ arguments;
\item $\mu$ is multiplication in $T_{\text{poly}}$, viewing the $f_i$ as cohomological degree 0 vector fields;
\item and $\pi$ is the projection $T_{\text{poly}} \twoheadrightarrow \Sym (V(-1))$. 
\end{itemize}
Clearly, $D_\Gamma$ is weight-homogeneous, so the Taylor components $U_n$ of the $L_\infty$ morphism are as well. Hence, we deduce

\begin{thm} \label{L infinity formality} Let $U$ be the Kontsevich $L_\infty$ formality morphism \cite{CF} restricted to $T_{\text{\emph{poly}}}^{\text{\emph{even}}} (V^\vee(1))(1)$. The composition
\[ \xymatrixcolsep{5pc}\xymatrix{
C^*_{\text{\emph{even}}} (\Sym (V(-1))(\text{\emph{wt}})(1) \ar[r]^-{(h^*)^{-1}} & C^*_{\text{\emph{even}}} (\Ext V)(1)\\
T_{\text{\emph{poly}}}^{\text{\emph{even}}} (V^\vee(1))(\text{\emph{wt}})(1) \ar[u]^-{U} & \ar[l]^-{\cong} \ar@{.>}[u] HH^*_{\text{\emph{even}}}(\Ext V)(1) } \]
is an $L_\infty$ quasi-isomorphism where $C^*(\Ext V)(1)$ and $HH^*(\Ext V)(1)$ are given their standard differential graded Lie structures. Therefore, when $\text{dim} \, V$ is even, $C^*(\Ext V)(1)$ is formal as an $L_\infty$ algebra.
\end{thm}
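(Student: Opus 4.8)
The plan is to exhibit the displayed composition as a chain of three $L_\infty$ quasi-isomorphisms, so that the only genuinely new analytic input is the Cattaneo--Felder morphism $U$ together with the elementary observation that it is weight-homogeneous; the rest is assembly of maps already constructed in the excerpt. First I would dispose of the two outer arrows, both of which are strict. The isomorphism $HH^*_{\text{even}}(\Ext V)(1) \xrightarrow{\cong} T_{\text{poly}}^{\text{even}}(V^\vee(1))(\text{wt})(1)$ is the inverse of the Gerstenhaber isomorphism of Theorem \ref{calculus}(2); since both sides carry the zero differential it is a strict isomorphism of graded Lie algebras, hence an $L_\infty$ isomorphism. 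At the other end, Proposition \ref{B infinity} asserts that $h^*$ is an isomorphism of $B_\infty$-algebras, so $(h^*)^{-1}$ strictly intertwines the Gerstenhaber brackets and Hochschild differentials and is again an $L_\infty$ isomorphism. As a composite of $L_\infty$ quasi-isomorphisms is one, it remains to treat the middle arrow.

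Next I would show that $U$ restricts to an $L_\infty$ quasi-isomorphism of the even-weight, sheared complexes. By \cite{CF} the full Taylor series $U=(U_n)$ is an $L_\infty$ quasi-isomorphism $T_{\text{poly}}(V^\vee(1))(1)\to C^*(\Sym(V(-1)))(1)$ with linear term $U_1$ the HKR map \eqref{HKR map}. The formula \eqref{CF formality} builds each $D_\Gamma$, and hence each $U_n$, out of the contraction operators $\sum_k \frac{\partial}{\partial\theta_k}\otimes\frac{\partial}{\partial\xi_k}$ together with weight-preserving multiplication and projection; each edge of $\Gamma$ removes one $\theta_k$ from a polyvector argument and one $\xi_k$ from a function argument, leaving total weight unchanged, so a direct count gives $w(U_n(\gamma_1,\dots,\gamma_n))=w(\gamma_1)+\dots+w(\gamma_n)$. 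In particular $U_n$ carries even-weight inputs to an even-weight cochain. Since the Schouten and Gerstenhaber brackets are weight-additive, $T_{\text{poly}}^{\text{even}}$ and $C^*_{\text{even}}$ are sub-differential-graded-Lie-algebras, and the weight count shows $U$ restricts to an $L_\infty$ morphism between them. Its linear term is the restriction of HKR, which is weight-homogeneous and hence a quasi-isomorphism on each weight-graded piece, so it is a quasi-isomorphism on the even-weight part; thus the restricted $U$ is an $L_\infty$ quasi-isomorphism. Finally, weight-homogeneity is exactly the condition under which $U$ is compatible with the shearing $(\text{wt})$: applying $(\text{wt})$ to source and target relabels differential degree by weight consistently across all $U_n$ and all brackets, preserving the $L_\infty$ relations and cohomology, so the sheared map remains an $L_\infty$ quasi-isomorphism.

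Stringing the three arrows together yields an $L_\infty$ quasi-isomorphism $HH^*(\Ext V)(1)=HH^*_{\text{even}}(\Ext V)(1)\to C^*_{\text{even}}(\Ext V)(1)$, where the equality uses that the odd-weight cohomology vanishes when $N$ is even (Proposition \ref{oddcoh}(1), via Theorem \ref{coh}). To upgrade this to formality of the entire complex, I would note that the Hochschild differential preserves weight, so $C^*(\Ext V)(1)=C^*_{\text{even}}(\Ext V)(1)\oplus C^*_{\text{odd}}(\Ext V)(1)$ as complexes with $C^*_{\text{even}}(\Ext V)(1)$ a sub-differential-graded-Lie-algebra. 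Since $C^*_{\text{odd}}(\Ext V)(1)$ is acyclic, the inclusion $C^*_{\text{even}}(\Ext V)(1)\hookrightarrow C^*(\Ext V)(1)$ is a quasi-isomorphism of differential graded Lie algebras, hence an $L_\infty$ quasi-isomorphism. Composing, $C^*(\Ext V)(1)$ is $L_\infty$ quasi-isomorphic to its cohomology and therefore formal.

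I expect the main obstacle to lie in the shearing step of the middle paragraph: one must verify that weight-homogeneity of \emph{every} higher Taylor component $U_n$ (and not merely of the binary operations already handled by Proposition \ref{B infinity}) genuinely suffices for the sheared, even-weight-restricted maps to satisfy the $L_\infty$ identities with the correct Koszul signs introduced by the degree shift $(\text{wt})$, rather than matching only at the level of underlying graded vector spaces.
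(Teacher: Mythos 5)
Your proposal is correct and follows essentially the same route as the paper: the paper's entire argument is the observation that each $D_\Gamma$ in \eqref{CF formality} is weight-homogeneous, so the Cattaneo--Felder morphism restricts to the even-weight subcomplexes, after which one composes with the strict isomorphisms $(h^*)^{-1}$ (Proposition \ref{B infinity}) and the identification of cohomology from Theorem \ref{calculus}, and deduces full formality from acyclicity of the odd-weight part when $\dim V$ is even. Your flagged ``obstacle'' about shearing is handled implicitly in the paper by the fact that on even-weight elements the shift $(\text{wt})$ changes degrees by even amounts, so no Koszul signs are introduced.
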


Formality for other types of algebra structures follows similarly, but there are some technical points in adapting results to the graded symmetric algebra. We give only a brief sketch. For any quasi-isomorphism of operads $G_\infty \to B_\infty$ \cite{Tamarkin}, the complexes $C^*_{\text{even}}(\Ext V)$ and $C^*_{\text{even}}(\Sym V(-1))(\text{wt})$ acquire isomorphic $G_\infty$ structures by Theorem \ref{B infinity}, reducing to the usual Gerstenhaber structures on cohomology. As for $L_\infty$ formality above, one needs to exhibit a $G_\infty$ quasi-isomorphism for $\Sym (V(-1))$ that is weight-homogeneous. For the ungraded symmetric algebra, such a morphism is demonstrated in \cite{CalaqueVDB}, which more generally proves affine equivariance. Alternatively, following \cite{Willwacher}, one could adapt the action of the colored operad $bigGra$ to the graded setting, imparting a $Br_\infty$ structure to $T_{\text{poly}}(V^\vee(1))$. This $Br_\infty$ structure and the action of Kontsevich graphs on the colored vector space $T_{\text{poly}}(V^\vee(1)) \oplus D_{\text{poly}}(V^\vee(1))$ are seen to be weight-homogeneous, so the arguments in \cite{Willwacher} imply $Br_\infty$ and $G_\infty$ formality for the exterior algebra on an even dimensional vector space.      

The odd dimensional case is where the exterior and symmetric algebras truly diverge. According to Theorem \ref{calculus}, the adjoint action of a volume form $\omega \in \Ext ^N V$ on Hochschild cohomology is trivial except on $HH^1(\Ext V)$. On the cochain level, however, any lift of $\omega$ pairs nontrivially with all graded components. This difference is already enough when $N =1$ to preclude $L_\infty$ formality. 

\begin{thm} \label{formality fails}
Let $\mathds{k}$ be any characteristic zero field, and suppose $|x| = 0$. There does not exist an $L_\infty$ quasi-isomorphism
\[ HH^*( \mathds{k}[x] / (x^2) )(1) \longrightarrow C^*( \mathds{k}[x]/(x^2) )(1) \]
where the left and right sides are given their standard differential graded Lie structures.
\end{thm}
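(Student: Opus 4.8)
The plan is to detect non-formality by twisting the hypothetical formality morphism by the volume class and comparing cohomologies, exploiting precisely the discrepancy flagged before the statement: the volume class acts almost trivially on $HH^*$ (Theorem \ref{calculus}) but ``fully'' on cochains.

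Write $A = \mathds{k}[x]/(x^2) = \Ext V$ with $V = \mathds{k}x$, so $N = 1$. First I would record the Gerstenhaber algebra $HH^*(A)$ from Theorem \ref{calculus}(3): a basis consisting of the unit $1$ and the volume class $\omega \in \Ext^1 V$ in degree $0$, the Euler class $\xi \in HH^1$ (represented by the derivation $x\,\partial/\partial x$), and one generator $\mu_n$ in each degree $n \geq 2$. By that theorem the only nonvanishing bracket involving $\omega$ is $[\omega,\xi] = \pm\omega$. Hence $\mathrm{ad}_\omega = [\omega,-]$ is square-zero (since $[\omega,\omega]=0$), sends $\xi \mapsto \pm\omega$, and kills $1,\omega$ and every $\mu_n$; so $H\bigl(HH^*(A),\mathrm{ad}_\omega\bigr) = \langle 1,\mu_2,\mu_3,\dots\rangle$ is nonzero, being one--dimensional in every degree except $1$.

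Now suppose toward a contradiction that an $L_\infty$ quasi-isomorphism $U\colon HH^*(A)(1) \to C^*(A)(1)$ exists, and reduce all gradings modulo $2$ as in the other theorems of the paper, so that $\omega$ (of shifted degree $-1$) becomes an odd Maurer--Cartan element: $d\omega = 0$ and $[\omega,\omega] = 0$. I would then twist $U$ by $\omega$. Since $C^{-1}(A) = 0$ forces $HH^0(A) = Z(A)$, the class $\omega$ has the unique cochain representative $x \in A$; moreover the higher terms $U_n(\omega^{\otimes n})$ for $n \geq 2$ land in negative Hochschild degree and therefore vanish, so the Maurer--Cartan element induced on the target is exactly the central $0$-cochain $x$. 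As twisting is functorial and carries $L_\infty$ quasi-isomorphisms to $L_\infty$ quasi-isomorphisms, the linear term of $U^\omega$ yields a quasi-isomorphism $\bigl(HH^*(A),\mathrm{ad}_\omega\bigr) \simeq \bigl(C^*(A),\, d+\mathrm{ad}_x\bigr)$; all twisting sums are finite because each extra $\omega$ strictly lowers Hochschild degree.

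Finally I would compute the right-hand cohomology directly on the normalized bar complex, where the reduced module $\bar{A}^{\otimes n}$ is one--dimensional. There $d$ simplifies because $x\cdot x = 0$ kills all multiplication terms, reducing to multiplication by $2x$ in odd arity and to $0$ in even arity, while $\mathrm{ad}_x$ (the signed sum of the $n$ insertions of $x$) telescopes via its alternating signs to a single contraction in odd arity and to $0$ in even arity. A short check then shows that both $\ker(d+\mathrm{ad}_x)$ and $\operatorname{im}(d+\mathrm{ad}_x)$ equal the even-arity subspace, so $H\bigl(C^*(A),\,d+\mathrm{ad}_x\bigr) = 0$; the factor $2$ is exactly where the characteristic-zero hypothesis enters. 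This contradicts the nonvanishing of $H\bigl(HH^*(A),\mathrm{ad}_\omega\bigr)$, so no such $U$ exists.

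The step I expect to be the main obstacle is the middle one: justifying that twisting the formality morphism by $\omega$ produces a genuine quasi-isomorphism of the twisted complexes. This requires handling the nonstandard degree $-1$ of $\omega$, which I resolve by passing to the modulo $2$ grading already used in the paper, and controlling convergence of the twisting series, which succeeds here only because the Hochschild complex is concentrated in nonnegative degrees so that every correction term eventually vanishes.
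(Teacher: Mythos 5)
Your computations on both ends are correct, and the strategy --- twist a hypothetical formality morphism by the volume class $\omega = x$, then compare $H\bigl(HH^*(A),\mathrm{ad}_\omega\bigr)\neq 0$ with $H\bigl(C^*(A),d+\mathrm{ad}_x\bigr)=0$ --- is genuinely different from the paper's, which instead extracts a nonvanishing Massey triple product $\langle x,x,x\partial^{2i-1}\rangle$ from the $L_\infty$ relations for $f_1,f_2,f_3$. But the step you yourself flag as the main obstacle is a genuine gap, and it is not resolved by the convergence argument you give. The assertion that ``twisting carries $L_\infty$ quasi-isomorphisms to $L_\infty$ quasi-isomorphisms'' is \emph{not} a theorem in the generality you invoke: the Goldman--Millson-type statements (Getzler, Dolgushev--Rogers) require complete filtered $L_\infty$-algebras with the Maurer--Cartan element in the first filtration level, and $x$ carries no formal parameter and sits in degree $-1$, not $+1$. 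Finiteness of the sums defining $U^\omega$ makes the twisted morphism \emph{well defined}, but does not make it a \emph{quasi-isomorphism}: the standard proof of that implication runs a spectral sequence whose convergence is exactly what fails here. Indeed your own computation exhibits the failure --- filtering $(C^*(A),d+\mathrm{ad}_x)$ by Hochschild degree gives an $E_1$-page equal to $(HH^*(A),\mathrm{ad}_x)$, which is nonzero, while the abutment is zero. So no soft appeal to functoriality of twisting can close the argument; as written, one could just as well read your two computations as evidence that twisting fails to preserve quasi-isomorphisms for $U$, rather than that $U$ does not exist.

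The gap is probably repairable, but the repair is the real content and must be supplied. What saves the situation is special to this example: both $HH^*(A)(1)$ and $C^*(A)(1)$ are bounded below in the $\mathbb{Z}$-grading with bottom degree $-1$, and $\omega$ lives in that bottom degree, so (a) every insertion of an extra $\omega$ into a $\mathbb{Z}$-graded Taylor coefficient strictly lowers degree and eventually lands in the zero space, (b) a $\mathbb{Z}$-graded $L_\infty$ quasi-inverse $V$ and $L_\infty$ homotopies $V\circ U\sim\mathrm{id}$, $U\circ V\sim\mathrm{id}$ can be chosen and twisted term by term with all sums finite, and (c) since degree $-2$ vanishes, the $dt$-component of the twisted homotopy applied to $\omega$ is forced to be zero, so no divergent gauge transformation is needed to match up the twisted targets. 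Carrying this out shows $(U^\omega)_1$ admits a two-sided inverse up to chain homotopy, hence is a quasi-isomorphism, and your contradiction then stands. Without an argument of this kind (or an equivalent obstruction-theoretic one), the proof is incomplete at its central step. You should also record explicitly that $U_1(x)$ is a nonzero multiple of $x$ --- which follows, as in the paper, from $\overline{U_1}$ being a Lie algebra automorphism and $[x\partial,x]=x$ --- since your vanishing computation for the twisted cochain complex uses that the target Maurer--Cartan element is $cx$ with $c\neq 0$.
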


\begin{proof}
Let $\mathfrak{g} = C^*(\mathds{k}[x] / (x^2)) (1)$, and suppose $f : H^*(\mathfrak{g}) \to \mathfrak{g}$ is an $L_\infty$ quasi-isomorphism with Taylor components $f_n: H^*(\mathfrak{g})^{\otimes n} \to \mathfrak{g}$. If $\partial$ is the unique derivation of $\mathds{k}[x] / (x^2)$ such that $\partial (x) = 1$, then by Theorem \ref{calculus}
\begin{equation*} 
H^{n-1}(\mathfrak{g}) \cong  
	\begin{cases}
		0 & \text{if} \; n < 0 \\
		\mathds{k}[x] / (x^2) & \text{if $n = 0$} \\
		\mathds{k} \cdot \partial^{n} & \text{if $n > 0$ is even} \\
		\mathds{k} \cdot x \partial^n & \text{if $n > 0$ is odd}.
	\end{cases} 
\end{equation*}
Since each component $H^{n-1}(\mathfrak{g})$ for $n > 0$ is one-dimensional, the cochain $\partial^{\otimes n}$ if $n$ is even or $x \partial^{\otimes n}$ if $n$ is odd, as specified in \eqref{cocycle evaluation}, is the unique cocycle in $\mathfrak{g}^{n-1}$ up to nonzero scaling and up to coboundary. 

Consider the elements $x \in H^{-1}(\mathfrak{g})$ and $x \partial^{2i-1} \in H^{2i-2}(\mathfrak{g})$ where $i \gg 1$. The first map $f_1$ is a quasi-isomorphism and thus takes the values, up to nonzero scaling, 
\[ f_1(x) = x + a, \; \; \; f_1 (x \partial^{2i-1}) = x \partial^{\otimes 2i-1} + d \gamma \]
for some $a \in \mathds{k}$ and $\gamma \in \mathfrak{g}^{2i-3}$. The induced map $\overline{f_1}: H^*(\mathfrak{g}) \to H^*(\mathfrak{g})$ is an isomorphism of Lie algebras, so
\[ x + a = \overline{f_1}(x) = \overline{f_1} ([x \partial, x]) = [\overline{f_1}(x \partial),  \overline{f_1}(x)] = [x \partial, x + a] = x, \]
implying $a = 0$.

 By definition of an $L_\infty$ morphism, $f$ satisfies the relation
\[ f_1([x, x \partial^{2i-1}]) = 2 [ f_1(x), f_1(x \partial^{2i-1}) ] + d f_2(x, x \partial^{2i-1}). \]
But $[x, x \partial^{2i-1}] = 0$ for $i > 1$ by Theorem \ref{calculus}, so 
\[ df_2 (x, x \partial^{2i-1}) = -2 [f_1(x), f_1(x \partial^{2i-1})] = -2 [x, x \partial^{\otimes 2i-1} + d \gamma] = 2( x \partial^{\otimes 2i-2} - [ x, d\gamma] ). \] 
By an identity of Gerstenhaber \cite{Gers}, the last term $[x, d \gamma]$ equals
\[ - d \gamma \circ x = - d(\gamma \circ x) + (-1)^{2i-2} \gamma \circ dx + \gamma \cup x - (-1)^{(2i-2) \cdot 0} x \cup \gamma = - d(\gamma \circ x), \]
the operation $\circ$ being the first brace operation (also known as the Gerstenhaber circle product). Moreover, it is straightforward to check that $ -(1/2) d (\partial^{\otimes 2i - 3}) = x \partial^{ \otimes 2i-2}$, so we deduce
\[ f_2 (x, x \partial^{2i-1}) =  - \partial^{\otimes 2i-3} + 2 \gamma \circ x + z \]
where $z \in \mathfrak{g}^{2i-4}$ is a cocycle. We may further specify $z$ up to scaling and up to coboundary, writing
\begin{equation} \label{f2}
f_2 (x, x \partial^{2i-1}) = - \partial^{\otimes 2i-3} + 2 \gamma \circ x +  b x \partial^{\otimes 2i-3} + d\rho 
\end{equation}
where $b \in \mathds{k}$ and $\rho \in \mathfrak{g}^{2i-5}$.

At the next level, $f$ satisfies the relation
\begin{align*}
f_2 ([x,x], x \partial^{2i-1}) - 2f_2([x, x\partial^{2i-1}], x) & \\
=  2 [ f_1(x), f_2(x, x \partial^{2i-1}) ] & - [ f_1(x \partial^{2i-1}), f_2(x, x) ]  + d f_3(x, x, x \partial^{2i-1}).  
\end{align*} 
But $[x, x] = [x, x \partial^{2i-1}] = 0$, and for degree reasons $f_2(x, x) = 0$. Thus, this equation reduces to
\[ [x, f_2(x, x\partial^{2i-1}) ] = \frac{1}{2} d f_3 (x, x, x \partial^{2i-1}), \]
expressing the requirement that the Massey triple product $\langle x, x, x \partial^{2i-1} \rangle$ is zero in $H^*(\mathfrak{g})$. For a contradiction, however, we show that $[x, f_2(x, x\partial^{2i-1}) ]$ cannot be a coboundary. Using the expression \eqref{f2} above, observe
\begin{eqnarray*} 
[ x, f_2(x, x \partial^{2i-1}) ] & = & [x, - \partial^{\otimes 2i-3} + 2  \gamma \circ x + b x \partial^{\otimes 2i-3} + d\rho ] \\
& = & \partial^{ \otimes 2i-4} - ( \gamma \circ x ) \circ x - b x \partial^{\otimes 2i-4} - d \rho \circ x  \\
& = & \partial^{ \otimes 2i-4} - ( \gamma \circ x ) \circ x + \frac{1}{2} b d( \partial^{\otimes 2i-5}) - d (\rho \circ x).
\end{eqnarray*}
So it suffices to show $ \partial^{\otimes 2i-4} - (\gamma \circ x) \circ x$ is not a coboundary. Evaluating at $x^{\otimes 2i-4}$, we compute
\[ \partial^{\otimes 2i-4}(x^{\otimes 2i-4}) - (\gamma \circ x) \circ x \, (x^{\otimes 2i-4}) = -1 - 0. \]
On the other hand, for any $\mu \in \mathfrak{g}^{2i-6}$, $d\mu (x^{\otimes 2i-4})$ is a multiple of $x$; consequently, $d \mu $ cannot equal $ \partial^{\otimes 2i-4} - (\gamma \circ x) \circ x$. 
\end{proof}

We conjecture that the statement and proof generalize to any odd dimension.

To examine implications for deformation theory, recall that a Maurer-Cartan element of a differential graded Lie algebra $\mathfrak{g}$ is an element $\gamma \in \mathfrak{g}^1$ satisfying
\[ d \gamma + \frac{1}{2} [\gamma, \gamma] = 0. \]
We write $MC(\mathfrak{g})$ for the set of gauge equivalence classes of Maurer-Cartan elements (see, for example, \cite{Kont}). 

Let $R = \mathbb{R} \oplus R_+$ be a complete, augmented, and commutative $\mathbb{R}$-algebra, e.g., $R = \mathds{\mathbb{R}}[[\hbar]]$. For an associative algebra $A$ in cohomological degree 0, formal deformations of $A$ over $R$ are the same as Maurer-Cartan elements of the differential graded Lie algebra $C^*(A)(1) \hat{\otimes} R_+$, the symbol $\hat{\otimes}$ denoting the completed tensor product. A standard fact is that an $L_\infty$ quasi-isomorphism between two differential graded Lie algebras $\mathfrak{g}$ and $\mathfrak{h}$ induces a bijection $MC(\mathfrak{g} \hat{\otimes} R_+) \leftrightarrow MC(\mathfrak{h} \hat{\otimes} R_+)$. So for the exterior algebra, the $L_\infty$ quasi-isomorphism of Theorem \ref{L infinity formality} induces a bijection between gauge equivalence classes of formal deformations and
\[ MC( HH_{\text{even}}^*(\Ext V)(1) \hat{\otimes} R_+) = MC(HH^*(\Ext V)(1) \hat{\otimes} R_+ ). \]
But by the isomorphism in Theorem \ref{calculus}, this set is in bijective correspondence with
\[ MC (T_{\text{poly}}^{\text{even}} (V^\vee (1))(\text{wt})(1) \hat{\otimes} R_+). \]

Let us examine this set more closely. Up to gauge equivalence, an element is of the form $\sum_{i = 1}^\infty \gamma_i r_i$ where, given $n > 0$, $r_i$ is an element of $R_+^n$ for sufficiently large $i$ and $\gamma_i \in T_{\text{poly}}^{\text{even}}(V^\vee(1))(\text{wt})^2$ is a sum of even-weight bivectors, i.e., 
\[\xi_{j_1} \dots \xi_{j_t}  \frac{\partial}{\partial \xi_{i_1}} \frac{\partial}{\partial \xi_{i_2}}, \; \; t \equiv 0 \; \text{mod} \; 2  \]
in the notation of Remark \ref{Hochschild koszul duality}. The Maurer-Cartan condition is that the self-bracket under the $R_+$-linearly extended Schouten-Nijenhuis bracket is zero,
\[ [\sum_{i = 1}^\infty \gamma_i r_i, \sum_{i = 1}^\infty \gamma_i r_i] = \sum_{i,j = 1}^\infty [\gamma_i, \gamma_j]_{\text{SN}} r_i r_j = 0. \]
We say such elements are formal Poisson since a bivector $\gamma \in T_{\text{poly}}(V)$ satisfying $[\gamma, \gamma]_{\text{SN}} = 0$ defines a Poisson bracket on $\Sym (V^\vee)$. Through the isomorphism \eqref{koszul interchange}, the $\gamma_i$ correspond to polyvectors in $T_{\text{poly}}^{\text{even}}(V)(\text{wt})$ with quadratic coefficients, 
\[ \xi_{j_1} \dots \xi_{j_t}  \frac{\partial}{\partial \xi_{i_1}} \frac{\partial}{\partial \xi_{i_2}}  \longleftrightarrow x_{i_1}^\vee x_{i_2}^\vee \frac{\partial}{\partial x_{j_1}^\vee}  \dots\frac{\partial}{\partial x_{j_t}^\vee}, \]
and the resulting formal polyvector in $T_{\text{poly}}^{\text{even}}(V)(\text{wt})(1) \hat{\otimes} R_+$ still has self-bracket equal to 0. 

\begin{cor} \label{deformation theory}
For $V$ of any dimension, formal deformations of $\Ext V$ over $R$ are in one-to-one correspondence with formal Poisson bivectors in
\[ T_\text{\emph{poly}}^\text{\emph{even}} (V^\vee(1))(\text{\emph{wt}})(1) \hat{\otimes} R_+, \]
up to gauge equivalence. Under the Koszul duality between $\Sym (V^\vee)$ and $\Sym(V(-1))$, these correspond to formal Poisson polyvectors with quadratic coefficients in
\[   T_\text{\emph{poly}}^\text{\emph{even}} (V)(\text{\emph{wt}})(1) \hat{\otimes} R_+. \]
\end{cor}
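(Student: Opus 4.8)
The plan is to assemble the statement from the deformation-theoretic dictionary recalled just above, the even-weight formality of Theorem~\ref{L infinity formality}, and the Gerstenhaber identifications of Theorem~\ref{calculus} together with the Koszul interchange \eqref{koszul interchange}. First I would record that formal deformations of $\Ext V$ over $R$ are the gauge equivalence classes of Maurer-Cartan elements of the differential graded Lie algebra $\mathfrak{g} = C^*(\Ext V)(1)\,\hat{\otimes}\,R_+$, and that any $L_\infty$ quasi-isomorphism of differential graded Lie algebras induces a bijection of such moduli.

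The crux is to reduce from the full cochain complex to its even-weight sub-DGLA. Since the differential and the Gerstenhaber bracket preserve weight parity, $\mathfrak{g}$ splits as complexes into $\mathfrak{g}_{\bar 0} = C^*_{\text{even}}(\Ext V)(1)\,\hat{\otimes}\,R_+$, which is a sub-DGLA, and $\mathfrak{g}_{\bar 1} = C^*_{\text{odd}}(\Ext V)(1)\,\hat{\otimes}\,R_+$, which is a differential graded module over it. By Theorem~\ref{coh} the cohomology of $\mathfrak{g}_{\bar 1}$ vanishes entirely when $N$ is even and, when $N$ is odd, is concentrated in cohomological degree $-1$ (the surviving $\Ext^N V$ class after the shift). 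In every case $\mathfrak{g}_{\bar 1}$ is acyclic in all non-negative degrees, so the summand inclusion $\mathfrak{g}_{\bar 0}\hookrightarrow\mathfrak{g}$ is an isomorphism on $H^n$ for $n\ge 0$, covering the gauge degree $0$, the Maurer-Cartan degree $1$, and the obstruction degree $2$. I would then run the standard induction over the powers of $R_+$ to gauge away the odd-weight component of any Maurer-Cartan element and to promote any gauge equivalence of even-weight solutions to an even-weight one, concluding that this inclusion induces a bijection of Maurer-Cartan moduli.

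With the reduction in hand, the even-weight formality of Theorem~\ref{L infinity formality}, which holds for every $V$, identifies the moduli with $MC\big(HH^*_{\text{even}}(\Ext V)(1)\,\hat{\otimes}\,R_+\big)$; as the cohomology carries the zero differential, the Maurer-Cartan condition reduces to the vanishing of the self-Schouten bracket. By Theorem~\ref{calculus} the degree-$1$ elements here are exactly the even-weight bivectors $\xi_{j_1}\cdots\xi_{j_t}\frac{\partial}{\partial\xi_{i_1}}\frac{\partial}{\partial\xi_{i_2}}$ with $t$ even, so the moduli are precisely the formal Poisson bivectors in $T_{\text{poly}}^{\text{even}}(V^\vee(1))(\text{wt})(1)\,\hat{\otimes}\,R_+$. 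I would finish by transporting this description through the Gerstenhaber isomorphism \eqref{koszul interchange}: since it preserves the bracket it sends self-bracket-zero bivectors to self-bracket-zero elements, which the coordinate formula exhibits as the polyvectors with quadratic coefficients in $T_{\text{poly}}^{\text{even}}(V)(\text{wt})(1)\,\hat{\otimes}\,R_+$.

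The main obstacle is exactly this reduction when $N$ is odd, where full $L_\infty$ formality genuinely fails by Theorem~\ref{formality fails}: one must argue that the surviving odd-weight class, living in cohomological degree $0$, is invisible to the Maurer-Cartan functor even though its cochain-level lifts pair nontrivially with everything. The acyclicity of $\mathfrak{g}_{\bar 1}$ in non-negative degrees is precisely what lets the obstruction-theoretic induction close, so I would phrase the gauging argument filtration by filtration in $R_+$ rather than appealing to a quasi-isomorphism of the full complexes, which does not exist in the odd case.
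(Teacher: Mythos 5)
Your proposal is correct and follows the same overall route as the paper: identify formal deformations with Maurer--Cartan elements of $C^*(\Ext V)(1)\,\hat{\otimes}\,R_+$, invoke the weight-homogeneous Kontsevich morphism of Theorem \ref{L infinity formality} to pass to $HH^*_{\text{even}}(\Ext V)(1)\,\hat{\otimes}\,R_+$, read off the degree-$1$ elements as even-weight bivectors via Theorem \ref{calculus}, and transport through \eqref{koszul interchange}. The one place you genuinely diverge is the reduction from the full cochain DGLA to its even-weight sub-DGLA. The paper handles the weight reduction only at the level of cohomology, noting $MC(HH^*_{\text{even}}(\Ext V)(1)\,\hat{\otimes}\,R_+)=MC(HH^*(\Ext V)(1)\,\hat{\otimes}\,R_+)$ because the odd-weight class $\Ext^N V$ sits in degree $-1$ after the shift, and otherwise applies the ``standard fact'' about $L_\infty$ quasi-isomorphisms directly to the morphism landing in $C^*_{\text{even}}(\Ext V)(1)$; when $N$ is odd that composite into the full complex is not literally a quasi-isomorphism (it misses $H^{-1}$), so the cited fact does not apply verbatim. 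You instead make the cochain-level reduction explicit: the weight-parity splitting gives $\mathfrak{g}_{\bar 0}\hookrightarrow\mathfrak{g}$ inducing isomorphisms on $H^0$, $H^1$, $H^2$ because $\mathfrak{g}_{\bar 1}$ is acyclic in non-negative degrees, and a Goldman--Millson-type induction over powers of $R_+$ then gives the bijection of Maurer--Cartan moduli. This is the right fix for the odd-dimensional case and makes precise a step the paper leaves implicit; it costs you a paragraph of obstruction theory but removes any reliance on full formality, which Theorem \ref{formality fails} shows is unavailable. The rest of your argument (zero differential on cohomology reducing Maurer--Cartan to the vanishing self-Schouten-bracket, and the bracket-preserving coordinate interchange producing quadratic-coefficient polyvectors) coincides with the paper's.
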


Therefore, just as for the ordinary symmetric algebra, deformation theory of the exterior algebra can be studied in terms of formal Poisson structures.

\appendix

\section{Hochschild (co)homology of a graded algebra}

The literature contains different sign conventions for Hochschild (co)homology of a graded algebra. The subject can be approached on the one hand from the homological algebra of ordinary graded objects (e.g., \cite{kassel}) or, on the other, from the theory of differential graded objects (e.g., \cite{Abb}). The following establishes a relationship between the two from a general point of view, paying careful attention to sign rules. Comparing graded modules to differential graded modules and comparing their derived functors have been done before (see e.g. \cite{beck}, \cite{BMR}, \cite{PolVan}), but we could not find a discussion of totalization of Hochschild (co)homology for non-perfect bimodules.

Let $B$ be a $\mathbb{Z}$-graded associative $\mathds{k}$-algebra and $B\mbox{-}gr$ the abelian category whose objects are $\mathbb{Z}$-graded left $B$-modules and morphisms are degree preserving $B$-linear maps. Then consider the category of chain complexes $Ch(B\mbox{-}gr)$. An object in this category is a bicomplex $(M^{*,*}, d)$ where the first index is cohomological degree and the second is internal degree; the horizontal differential has degree $(+1, 0)$ and the vertical differential is trivial. The corresponding category for right $B$-modules, $Ch(gr\mbox{-}B)$, is defined in the same way.

Let $B^{dg}$ be the algebra $B$ thought of as a differential graded algebra. Let $B^{dg}\mbox{-}mod$ be the category whose objects are differential graded left $B^{dg}$-modules and morphisms are degree preserving $B^{dg}$-linear maps. In particular, an object is a chain complex $(M^*, d)$ satisfying $d(bm) = (-1)^{|b|} bd(m)$ for all $b \in B^{dg}$. The corresponding category for right $B^{dg}$-modules, $mod\mbox{-}B^{dg}$, is defined in the same way except the differential satisfies $d(mb) = d(m) b$ for all $b \in B^{dg}$. 

There are two functors by which to obtain an object in $B^{dg}\mbox{-}mod$ from an object $M \in Ch(B \mbox{-} gr)$. First, there is direct sum totalization, 
\[ \Tplus (M)^i : = \bigoplus_{j \in \mathbb{Z}} \Sigma^{j} (M^{j, i-j}) \]
where $\Sigma$ denotes suspension of the internal degree. The $B^{dg}$-module structure is defined by
\[ b \, s^{j} m = (-1)^{j |b|} s^{j} bm \]
and the differential by 
\[ d(s^{j} m) : = s^{j + 1} d(m) \]
so as to satisfy the needed identity $d(b \, s^{j}m) = (-1)^{|b|} b d(s^{j}m)$ \cite{beck}. Alternatively, there is direct product totalization, 
\[ \Tprod (M)^i = \prod_{j \in \mathbb{Z}} \Sigma^{j} (M^{j, i-j}), \]
with $B^{dg}$-module structure and differential defined as above in each coordinate. The two totalization functors are defined the same way for $Ch(gr\mbox{-}B)$ except there is no sign in the right action of $B^{dg}$.

Let $Ch_{\leq 0}(B\mbox{-}gr)$ be the full subcategory of $Ch(B\mbox{-}gr)$ consisting of chain complexes concentrated in non-positive cohomological degree. In the standard projective model structure on $Ch_{\leq 0}(B\mbox{-}gr)$ \cite{hovey}, cofibrant objects are complexes of projective graded $B$-modules. If $P$ is such an object, then $\Tplus(P)$ is a retract of a semifree $B^{dg}$-module, which is cofibrant in the standard projective model structure on $B^{dg}\mbox{-}mod$ \cite{BMR}. This leads to the following comparison:

\begin{prop} \label{equiv}
Let $M \in B\mbox{-}gr$ and $N \in gr\mbox{-}B$. In $\mathds{k}^{dg}\mbox{-}mod$, there is a quasi-isomorphism
\[ \Tplus (N \otimes^{L}_B M) \longrightarrow \Tplus (N) \otimes^{L}_{B^{dg}} \Tplus(M). \]
\end{prop}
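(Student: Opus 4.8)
The plan is to compute both derived tensor products from a single projective resolution of $M$ and then reduce the statement to the strong monoidality of $\Tplus$. First I would choose a cofibrant replacement $q\colon P \xrightarrow{\sim} M$ in $Ch_{\leq 0}(B\mbox{-}gr)$; by the projective model structure recalled above, $P$ is a complex of projective graded $B$-modules. Since the left-derived tensor product may be computed by resolving either factor, $N\otimes^{L}_B M$ is represented by $N\otimes_B P$, and hence $\Tplus(N\otimes^{L}_B M)$ is represented by $\Tplus(N\otimes_B P)$.

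Next I need that $\Tplus$ preserves quasi-isomorphisms. Because objects of $Ch(B\mbox{-}gr)$ carry trivial vertical differential, for any $X$ the complex $\Tplus(X)$ splits, up to suspension and reindexing, as the direct sum over internal degrees $t$ of the horizontal complexes $X^{*,t}$; as cohomology commutes with direct sums, $H^*(\Tplus(X))$ is the corresponding direct sum of the horizontal cohomologies $H^*(X^{*,t})$. A quasi-isomorphism in $Ch(B\mbox{-}gr)$ is precisely a horizontal quasi-isomorphism in each internal degree, so $\Tplus$ sends it to a quasi-isomorphism; in particular $\Tplus(q)\colon \Tplus(P)\xrightarrow{\sim}\Tplus(M)$ is a quasi-isomorphism. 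Combined with the fact recalled in the excerpt that $\Tplus(P)$ is a retract of a semifree $B^{dg}$-module, hence cofibrant in $B^{dg}\mbox{-}mod$, this exhibits $\Tplus(P)\to\Tplus(M)$ as a cofibrant replacement. Therefore $\Tplus(N)\otimes^{L}_{B^{dg}}\Tplus(M)$ is represented by $\Tplus(N)\otimes_{B^{dg}}\Tplus(P)$.

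It then remains to produce a natural isomorphism $\Tplus(N\otimes_B P)\cong\Tplus(N)\otimes_{B^{dg}}\Tplus(P)$, where the sign conventions of the appendix become essential. I would prove that $\Tplus\colon(Ch(\mathds{k}\mbox{-}gr),\otimes_{\mathds{k}})\to(\mathds{k}^{dg}\mbox{-}mod,\otimes_{\mathds{k}})$ is strong monoidal: for bicomplexes with trivial vertical differential the totalization comparison $s^{j} c\otimes s^{k} d\mapsto (-1)^{k|c|}\,s^{j+k}(c\otimes d)$, with $|c|$ the total degree of $c$ and the exponent the Koszul sign obtained by commuting $s^{k}$ past $c$, is a natural isomorphism of dg $\mathds{k}$-modules, this sign being exactly what makes the map commute with the totalized differential $d(s^{j} m)=s^{j+1}dm$. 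Since $\Tplus$ carries the graded algebra $B$ to $B^{dg}$ with the module-structure signs $b\,s^{j} m=(-1)^{j|b|}s^{j} bm$ of the appendix, which are precisely the coherence constraints of this monoidal structure, a strong monoidal functor sends $B$-modules to $B^{dg}$-modules and commutes with relative tensor products; using that $\Tplus$ is additive and preserves the coequalizer presenting $\otimes_B$, this yields the desired isomorphism. Composing the three steps produces the quasi-isomorphism $\Tplus(N\otimes^{L}_B M)\xrightarrow{\sim}\Tplus(N)\otimes^{L}_{B^{dg}}\Tplus(M)$.

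The step I expect to be the main obstacle is this last one: the sign bookkeeping needed to verify that the monoidal comparison map is $B^{dg}$-bilinear and a chain map, that is, tracking the interaction of the suspension signs $(-1)^{k|c|}$ with the action signs $(-1)^{j|b|}$ and confirming compatibility with the coequalizer defining $\otimes_B$. Everything else is formal once $\Tplus$ is known to preserve quasi-isomorphisms and to send cofibrant objects to cofibrant objects.
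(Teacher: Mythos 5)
Your proposal is correct and follows essentially the same route as the paper: resolve $M$ by a complex of projective graded $B$-modules, use that $\Tplus(P)$ is cofibrant over $B^{dg}$, and identify $\Tplus(N\otimes_B P)$ with $\Tplus(N)\otimes_{B^{dg}}\Tplus(P)$ via the sign-twisted comparison map, which the paper writes out explicitly as $s^{j}(n\otimes_B p)\mapsto(-1)^{j|n|}\,n\otimes_{B^{dg}}s^{j}p$ and checks to be $B$-balanced and a chain map. Your packaging of that last step as strong monoidality of $\Tplus$ together with preservation of the coequalizer defining $\otimes_B$, and your explicit verification that $\Tplus$ preserves quasi-isomorphisms, are just more structured versions of the same computation.
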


\begin{proof}
Let $P^{*,*} \in Ch_{\leq 0}(B\mbox{-}gr)$ be a projective resolution of $M$. Define a map 
\[ \varphi: \Tplus (N \otimes_B P) \to \Tplus(N) \otimes_{B^{dg}} \Tplus(P) \]
that on homogeneous elements of $\Sigma^j N \otimes_B P^{j, *}$ evaluates as
\[ \varphi: s^{j}(n \otimes_B p) \mapsto (-1)^{j |n|} n \otimes_{B^{dg}} s^{j} p. \]
Observe that
\[ \varphi (s^j (n \otimes_B bp)) = (-1)^{j |n|} n \otimes_{B^{dg}} s^j (bp) = (-1)^{j (|n| + |b|)} nb \otimes_{B^{dg}} s^j p = \varphi (s^j(nb \otimes_B p)), \]
so $\varphi$ is well-defined. It is clearly a levelwise isomorphism, so we have only to show that it is compatible with the differentials:
\begin{align*} 
\varphi \circ d(s^{j} (n \otimes_B p)) & =  \varphi (s^{j + 1}(n \otimes_B dp)) = (-1)^{(j+1)|n|} n \otimes_{B^{dg}} s^{j+1} dp, \\
d \circ \varphi (s^{j}(n \otimes_B p)) & = d( (-1)^{j |n|} n \otimes_{B^{dg}} s^{j} p ) = (-1)^{j|n| + |n|} n \otimes_{B^{dg}} s^{j+1} dp. \qedhere
\end{align*}
\end{proof} 

There is an analogous statement for derived $Hom$. For any $M, N \in B\mbox{-}gr$, let $Hom_B(M,N)$ be the graded vector space spanned by homoegeneous $B$-linear maps
\[ Hom_B(M,N) : = \bigoplus_{n \in \mathbb{Z}} Hom_{B\mbox{-}gr} (M, N[n]). \]
Similarly, for any $M, N \in B^{dg}\mbox{-}mod$, let $Hom_{B^{dg}}(M,N)$ be the differential graded vector space spanned by homogeneous $B^{dg}$-linear maps
\[ Hom_{B^{dg}}(M, N) : = \bigoplus_{n \in \mathbb{Z}} Hom_{B^{dg}\mbox{-}mod} (M, N(n)), \; \; d f = d_N f - (-1)^{|f|} f d_M. \]

\begin{prop} \label{coh equiv}
Let $M, N \in B\mbox{-}gr$. In $\mathds{k}^{dg}\mbox{-}mod$, there is a quasi-isomorphism
\[ \Tprod \, RHom_B (M, N) \longrightarrow R Hom_{B^{dg}} (\Tplus(M), \Tplus (N) ). \]
\end{prop}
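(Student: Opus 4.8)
The plan is to imitate the proof of Proposition \ref{equiv}, replacing the tensor product by $Hom$ and tracking how contravariance forces the product totalization $\Tprod$ onto the source. First I would fix a projective resolution $P^{*,*} \in Ch_{\leq 0}(B\mbox{-}gr)$ of $M$ by complexes of projective graded $B$-modules, so that $RHom_B(M,N)$ is computed by $Hom_B(P,N)$. For the target I would invoke the fact recorded just before the statement that $\Tplus(P)$ is a retract of a semifree, hence cofibrant, $B^{dg}$-module; together with the observation that $\Tplus$ sends the acyclic cone of $P \to M$ (which is bounded above in cohomological degree) to an acyclic differential graded module, this makes $\Tplus(P) \to \Tplus(M)$ a cofibrant replacement, so that $RHom_{B^{dg}}(\Tplus(M), \Tplus(N))$ is computed by $Hom_{B^{dg}}(\Tplus(P), \Tplus(N))$. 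Since $N$ is concentrated in cohomological degree $0$, note that $\Tplus(N) = N$ with zero differential, which simplifies the target differential to $dg = -(-1)^{|g|} g \, d_{\Tplus(P)}$.

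Next I would write down the comparison map explicitly. Because $\Tplus(P)^a = \bigoplus_p \Sigma^p P^{p,\,a-p}$ is a direct sum, a $B^{dg}$-linear map out of it is precisely a product of its restrictions to the summands; this is the conceptual reason the source must be totalized by $\Tprod$, and it is what makes the comparison a \emph{levelwise} isomorphism rather than merely a quasi-isomorphism. Concretely, in internal degree $i$ an element of $\Tprod Hom_B(P,N)$ is a family $(s^p f_p)_p$ with $f_p \colon P^{-p} \to N[i-p]$, and I would send it to the $B^{dg}$-linear map $g$ determined on $s^a \tilde{p} \in \Sigma^a P^{a,*}$ by
\[ g(s^a \tilde{p}) = (-1)^{a|f_{-a}|} f_{-a}(\tilde{p}), \]
a Koszul sign of the same shape as the $(-1)^{j|n|}$ appearing in Proposition \ref{equiv}. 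A direct comparison of bidegrees shows that in internal degree $i$ both sides are $\prod_p Hom_{B\mbox{-}gr}(P^p, N[i+p])$, so this $\varphi$ is a levelwise isomorphism.

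The main work, and the step I expect to be the obstacle, is the verification that $\varphi$ commutes with the differentials. Both differentials are induced by $d_P$: on the source through the internal differential of the bicomplex $Hom_B(P,N)$ (precomposition with $d_P$) followed by the suspension signs of $\Tprod$, and on the target through $g \mapsto -(-1)^{|g|} g \, d_{\Tplus(P)}$, where $d_{\Tplus(P)}$ itself carries $d_P$ with its own suspension signs. The verification is therefore a bookkeeping of Koszul signs entirely parallel to the computation $\varphi \circ d = d \circ \varphi$ at the end of Proposition \ref{equiv}, and fixing the exponent in the definition of $\varphi$ so that these two expressions agree is the only delicate point.

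Finally, once compatibility with differentials is established, $\varphi$ is an isomorphism of complexes, and in particular the asserted quasi-isomorphism. One should also note that $Hom_B(P,N)$ is bounded below in cohomological degree (as $P$ is bounded above), so $\Tprod$ is well behaved on it and the identification $\Tprod RHom_B(M,N) = \Tprod Hom_B(P,N)$ is independent of the chosen resolution; this closes the identification of both sides with genuine derived functors.
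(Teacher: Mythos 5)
Your proposal follows the paper's proof essentially verbatim: same cofibrant replacement $\Tplus(P)$, same levelwise identification explaining why $\Tprod$ appears on the source, and your comparison map $g(s^a\tilde{p}) = (-1)^{a|f_{-a}|}f_{-a}(\tilde{p})$ is exactly the paper's $\psi(\prod s^j f_j)(s^{-k}p) = (-1)^{k(i-k)}f_k(p)$ after the reindexing $a=-k$, $|f_{-a}|=i+a$. The two verifications you defer ($B^{dg}$-linearity of the image map and commutation with the differentials) do go through with precisely this exponent, so no adjustment is needed.
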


\begin{proof} 
Let $P^{*,*} \in Ch_{\leq 0}(B\mbox{-}gr)$ be a projective resolution of $M$. Define a map
\[ \psi: \Tprod \Big( \bigoplus_{j \geq 0} Hom_B (P^{\mbox{-}j, *}, N)(-j) \Big) \to Hom_{B^{dg}} ( \Tplus(P), \Tplus(N)) \]
that on a homogeneous element of degree $i$ evaluates as
\[ \psi: \prod_{j \geq 0} s^j f_j \mapsto \Big( \tilde{f} : s^{-k} p \mapsto (-1)^{k (i - k)} f_k(p) \Big). \]
Observe
\begin{align*}
\tilde{f}( b s^{-k} p ) = (-1)^{-k |b|} \tilde{f}(s^{-k} bp) & = (-1)^{-k |b| + k(i - k)} f_k(bp) \\
& = (-1)^{-k |b| + k(i - k) + (i-k)|b|} b f_k(p) = (-1)^{i |b|} b \tilde{f} (s^{-k} p ), 
\end{align*}
so $\tilde{f}$ is indeed $B^{dg}$-linear. The map $\psi$ is clearly a levelwise isomorphism, so we have only to show that it is compatible with differentials: 
\begin{align*} \psi \circ d (\prod s^j f_j)(s^{-k} p) & = \psi (\prod s^{j+1} d f_j) (s^{-k} p) \\
& = (-1)^{k(i + 1 - k)} df_{k-1}(p) \\
& = (-1)^{k(i + 1 -k) + k} f_{k-1}( dp ), \\ 
d \circ \psi (\prod s^j f_j)(s^{-k} p) & = d \tilde{f} (s^{-k }p) \\
& = (-1)^{i + 1} \tilde{f}(d( s^{-k} p)) \\
& = (-1)^{i + 1} \tilde{f} (s^{-k +1} dp) \\
& = (-1)^{i + 1 + (k-1)(i - k + 1)} f_{k-1} (dp). \qedhere
 \end{align*}
 \end{proof} 

Letting $B = A^e$, $M = N = A$, and $P = \text{Bar}(A)$ in the above propositions, we have isomorphisms
\begin{eqnarray*} 
\varphi: \Tplus (C_{*,*} (A)) & \longrightarrow & A^{dg} \otimes_{(A^e)^{dg}} \Tplus(\text{Bar}(A)) \\
\psi: \Tprod (\widetilde{C}^{*,*}(A)) & \longrightarrow & Hom_{(A^e)^{dg}} (\Tplus(\text{Bar}(A)), A^{dg}).
\end{eqnarray*}
Furthermore, there is an isomorphism between the totalized bigraded bar resolution and the differential graded bar resolution
\begin{align*}
& q: \Tplus (\text{Bar}(A)) \to \text{Bar}^{dg}(A), \\
& q( s^n a_0[a_1 | \dots | a_n]a_{n+1}) = (-1)^{n |a_0| + (n-1)|a_1| + \dots + |a_{n-1}| } a_0[sa_1| \dots |sa_n] a_{n+1}. &
\end{align*}
Let $Q: Hom(\Tplus(\text{Bar}(A)), A^{dg}) \to Hom(\text{Bar}^{dg}(A), A^{dg})$ be the precomposition $Q(f) = f \circ q^{-1}$. Then we have the desired isomorphisms
\begin{eqnarray}
F &: = & (id_{A^{dg}} \otimes_{(A^e)^{dg}} q) \circ \varphi: \Tplus (C_{*,*}(A)) \longrightarrow C_*(A) \label{iso 1} \\
G &: =& Q \circ \psi: \Tprod (\widetilde{C}^{*,*}(A))  \longrightarrow \widetilde{C}^*(A). \label{iso 2} \nonumber
\end{eqnarray}
 
However, the computations in the body of the paper pertain to $A$ of the form \eqref{form} and to the subcomplexes of Hochschild cochains spanned by weight-homogeneous elements, which we labeled by $C^{*,*}(A)$ and $C^*(A)$. In the above formalism, graded $B$-modules and $B^{dg}$-modules are replaced by bigraded $B$-modules and graded $B^{dg}$-modules; the functors $\otimes_B$, $\otimes_{B^{dg}}$, $Hom_B$, and $Hom_{B^{dg}}$ are bigraded. Hochschild homology is automatically graded by weight. If in $A$ the internal degree of the generating vector space $V$ is 0, then the functors $\Tplus$ and $\Tprod$ are the identity functors, and the map $G$ restricts to the identity of $C^*(A)$. If the internal degree of the generators is a nonzero integer, then $\widetilde{C}^{*,*}(A) = C^{*,*}(A)$, and $G$ identifies the direct product totalization with $\widetilde{C}^*(A)$, which is the completion of $C^*(A)$ with respect to the weight grading. The subspace mapping isomorphically to $C^*(A)$ under $G$ is precisely the direct sum totalization,
\begin{equation}
G|_{\Tplus} : \Tplus (C^{*,*}(A)) \longrightarrow C^*(A) \label{iso 3}. 
\end{equation}
It is easy to check that the functor $\Tplus: Ch(\mathds{k}\mbox{-}gr) \to \mathds{k}^{dg}\mbox{-}mod$  exchanges shifts in internal degree with homological shifts, including shearing,
\[ \Tplus( M [\text{wt}] ) = \Tplus(M)(\text{wt}). \]

\bibliographystyle{plain}

\bibliography{Hochschildcohomologyofexterioralgebra}

\end{document}